\newcommand{\Z}{{\mathbb{Z}} }
\newcommand{\N}{{\mathbb{N}}}
\newcommand{\R}{\mathbb{R}}
\newcommand{\sech}{\mathrm{sech}}
\newcommand{\Comment}[1]{}
\renewcommand{\i}{\ifmmode\mathit{\mathchar"7010 }\else\char"10 \fi}
\renewcommand{\j}{\ifmmode\mathit{\mathchar"7011 }\else\char"11 \fi}
\newcommand{\Dx}{\Delta x}
\newcommand{\Dt}{\Delta t}
\newcommand{\norm}[1]{\left\|#1\right\|}
\newcommand{\Dpx}{D_+}
\newcommand{\Dmx}{D_-}
\newcommand{\Dcx}{D_0}
\newcommand{\Dpt}{D_t^+}
\newtheorem{theorem}{Theorem}[section]
\newtheorem{lemma}{Lemma}[section]
\newtheorem{remark}{Remark}[section]
\theoremstyle{definition} 
\newtheorem*{maintheorem*}{Main Theorem}
\numberwithin{equation}{section}
\numberwithin{figure}{section}
\numberwithin{table}{section}
\newcounter{asnr}
\ifnum\value{asnr}=0 \stepcounter{asnr} 
\title[Kawahara equation]{Convergence of numerical schemes for the Korteweg-de Vries-Kawahara Equation.}
\author[U. Koley]{U. Koley} \address[Ujjwal Koley]{\newline Centre of Mathematics for Applications (CMA)
  \newline University of Oslo\newline P.O. Box 1053, Blindern\newline
  N--0316 Oslo, Norway} \email[]{ujjwalk@cma.uio.no}
\keywords{Kawahara equation, finite difference schemes, existence, uniqueness}
\date{\today}
\begin{document}

\begin{abstract}
  We are concerned with the convergence of a numerical scheme for the initial-boundary value problem associated to the Korteweg-de Vries-Kawahara equation (in short Kawahara equation), which is a transport equation perturbed by dispersive terms of $3$rd and $5$th order. This equation appears in several fluid dynamics problems. It describes the evolution of small but finite amplitude long waves in various problems in fluid dynamics. We prove here the convergence of both semi-discrete as well as fully-discrete finite difference schemes for the Kawahara equation. Finally, the convergence is illustratred by several examples.
\end{abstract}

\maketitle

\section {Introduction}
This paper is concerned with the initial-boundary value problem of the Kawahara equation:
\begin{equation}
\begin{aligned}
\label{eq:kawahara}
u_t & = - u u_x - u_{xxx} + u_{xxxxx},
\end{aligned}
\end{equation}
with initial condition
\begin{equation}
\begin{aligned}
\label{eq:kawahara_initial}
u (x,0) &= f(x),\quad \text{for all $x$}
\end{aligned}
\end{equation}
and the boundary condition
\begin{equation}
\begin{aligned}
\label{eq:kawahara_boundary}
\qquad \qquad u(x,t)& = u(x+1,t), \quad \text{for all $x$ and $t$}
\end{aligned}
\end{equation}
It is well known that the one-dimensional waves of small but finite amplitude in dispersive systems (e.g., the magneto-acoustic waves in plasmas, the shallow water waves, the lattice waves and so on) can be described by the Korteweg-de Vries (KdV in short) equation, given by
\begin{equation}
\begin{aligned}
\label{eq:kdv}
u_t  = - u u_x - u_{xxx} , 
\end{aligned}
\end{equation}
which admits either compressive or rarefactive steady solitary wave solution (by a solitary water wave, we mean a travelling wave solution of the water wave equations for which the free surface approaches a constant height as $|x| \rightarrow \infty$) according to the sign of the dispersion term (the third order derivative term). Under certain circumstances, however, it might happen that the coefficient of the third order derivative in the KdV equation becomes small or even zero. In that case one has to take account of the higher order effect of dispersion in order to balance the nonlinear effect. In such cases one may obtain a generalized nonlinear dispersive equation, known as Kawahara equation, which has a form of the KdV equation with an additional fifth order derivative term given by \eqref{eq:kawahara}.
The Kawahara equation is an important nonlinear dispersive equation. It describes solitary wave propagation in media in which the first-order dispersion is anomalously small. A more specific physical background of this equation was introduced by Hunter and Scheurle \cite{hun}, where they used it to describe the evolution of solitary waves in fluids in which the Bond number is less than but close to $\frac{1}{3}$ and the Froude number is close to $1$. In the literature this equation is also referred to as the fifth order KdV equation or singularly perturbed KdV equation. The fifth order term $\partial_x^5 u$ is called the Kawahara term.
There has been a great deal of work on solitary wave solutions of the Kawahara equation \cite{kawa,kic,kato1,pon,gleb} over the past thirty years. It is found that, similarly to the KdV equation, the Kawahara equation also has solitary wave solutions which decay rapidly to zero as $t\to \infty$, but unlike the KdV equation whose solitary wave solutions are non-oscillating, the solitary wave solutions of the Kawahara equation have oscillatory trails. This shows that the Kawahara equation is not only similar but also different from the KdV equation in the properties of solutions, like what happens between the formulations of this equation and the KdV equation. The strong physical background of the Kawahara equation and such similarities and differences between it and the KdV equation in both the form and the behavior of the solution render the mathematical treatment of this equation particularly interesting. The Cauchy problem given by \eqref{eq:kawahara}, \eqref{eq:kawahara_initial} has been studied by a few authors \cite{bia,ver,kato2,hua,shang}. It has been shown that the problem \eqref{eq:kawahara}, \eqref{eq:kawahara_initial} has a local solution $u \in C([-T,T];H^r(\R))$ if $f \in H^r(\R)$ and $r>-1$. This local result combined with the energy conservation law yields that \eqref{eq:kawahara} has a global solution $u \in C([-\infty,\infty];L^2(\R))$ if $f \in L^2(\R)$. Well-posedness results can be found in \cite{bia}.

In this paper, we focus on the the derivation of convergent finite difference numerical methods for the initial-boundary value problem \eqref{eq:kawahara}, \eqref{eq:kawahara_initial}, \eqref{eq:kawahara_boundary}. The problem of analyzing convergent numerical schemes of course intimately connected with the mathematical properties of the Cauchy problem for the KdV equation, which is well developed in literature. As far as we are concerned, the first mathematical proof of the existence and uniqueness of the solutions of the KdV equation \eqref{eq:kdv} was accomplished by Sj\"{o}berg \cite{sjo}
in $1970$, using a finite difference semi-discrete approximation. In \cite{nils}, authors have considered a fully-discrete finite difference scheme for the KdV equation and showed the convergence of their scheme. In this paper, we are going to use a similar technique here. Note that Sj\"{o}berg's approach is based on a semi-discrete approximation where one discretizes the spatial variable, thereby reducing the equation to a system of ordinary differential equations. However, we also stress that for numerical computaions this set of ordinary differential equations should be further discretized in order to be solved. Thus in order to have a completely satisfactory numerical method, one seeks a fully discrete scheme that reduces the actual computation to a solution of a finite set of algebric equations. In the present paper, we consider both semi-discrete and fully-discrete cases to prove the following theorem:
\begin{theorem}
\label{theo:maintheorem}
If $f(x)$ is a $1$-periodic function and fifth derivative of $f(x)$ belongs to $L^2(\R)$, then there exists a unique solution of the problem \eqref{eq:kawahara},\eqref{eq:kawahara_initial} and \eqref{eq:kawahara_boundary}, i.e., there exists a function $u \in H^5(\R)$ with $u_t \in L^2(\R)$ for which the euality \eqref{eq:kawahara} holds as an $L^2$ equality.
\end{theorem}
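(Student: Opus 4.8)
The plan is to follow the semi-discrete (Sj\"oberg-type) strategy: discretize the spatial variable on a uniform periodic grid $x_j=j\Dx$ with $\Dx=1/N$, replacing the spatial derivatives in \eqref{eq:kawahara} by finite differences built from $\Dpx$, $\Dmx$ and $\Dcx$, so that \eqref{eq:kawahara} becomes a finite system of ordinary differential equations for the nodal values $u_j(t)$, subject to the periodicity \eqref{eq:kawahara_boundary} and to initial data sampled from \eqref{eq:kawahara_initial}. The dispersive terms $u_{xxx}$ and $u_{xxxxx}$ should be discretized by \emph{skew-symmetric} centred operators, and the nonlinear transport term $uu_x$ in a conservative, skew-symmetric form such as $\tfrac13\big(u\,\Dcx u+\Dcx(u^2)\big)$, since this is exactly the structure that makes the linear terms drop out and the nonlinear term be benign in the discrete energy identities. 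As the right-hand side is a smooth (polynomial) vector field, the Picard--Lindel\"of theorem gives a unique local-in-time solution of the ODE system; the whole difficulty is to promote this to a global solution with bounds independent of $\Dx$.

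First I would derive the discrete conservation laws and a priori estimates. Multiplying the scheme by $u_j$ and summing, discrete summation by parts (the exact analogue of integration by parts over the period) shows that the skew-symmetric dispersive terms contribute nothing and that the conservative nonlinear term also cancels, yielding conservation of the discrete $L^2$ norm, $\Dx\sum_j u_j(t)^2=\Dx\sum_j f(x_j)^2$. Next, mimicking the continuous Hamiltonian $\int(\tfrac12 u_x^2+\tfrac12 u_{xx}^2-\tfrac16 u^3)\,dx$, I would produce a second controlled discrete quantity giving a uniform $H^2$-type bound after absorbing the cubic term via a discrete Gagliardo--Nirenberg/Sobolev inequality. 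The key step is then the high-order estimate: applying the difference operators up to fifth order, forming $E_k=\Dx\sum_j(D^k u_j)^2$ and differentiating in $t$, summation by parts again kills the linear dispersive contributions, while the nonlinear contribution, after moving the top derivative off the highest factor by summation by parts, is bounded by $C(\|u\|_{H^2})E_k$ plus lower-order energies. A Gronwall argument bootstrapping from $L^2$ and $H^2$ up through $k=3,4,5$ then gives $E_k(t)\le E_k(0)e^{Ct}$ with $C$ and $E_k(0)$ independent of $\Dx$; here the hypothesis $f^{(5)}\in L^2(\R)$ guarantees $E_5(0)$ is bounded. This uniform discrete $H^5$ bound, together with the bound on $\partial_t u_j$ read off from the scheme (every term on the right being controlled once $H^5$ is), is the heart of the argument.

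With these estimates in hand I would pass to the limit. Interpolating the nodal values to grid functions $\udx$, the uniform $H^5$ bound gives weak-$\ast$ compactness in $L^\infty(0,T;H^5)$, while the bound on the time derivative gives equicontinuity in time; an Arzel\`a--Ascoli / Aubin--Lions compactness argument then extracts a subsequence converging strongly in, say, $C([0,T];H^4)$ to a limit $u$. Strong convergence is more than enough to pass to the limit in the nonlinear term $uu_x$ and in the linear difference operators (which converge to the corresponding derivatives), so that $u$ satisfies \eqref{eq:kawahara} as an $L^2$ identity with $u\in H^5$ and $u_t\in L^2$ inherited from the uniform bounds, and the conditions \eqref{eq:kawahara_initial}, \eqref{eq:kawahara_boundary} pass to the limit by construction. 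For uniqueness, given two solutions $u,\tilde u$ I set $w=u-\tilde u$, note $uu_x-\tilde u\tilde u_x=u w_x+w\,\tilde u_x$, and run the $L^2$ energy estimate on $w$: the skew-symmetric dispersive terms vanish and the nonlinear difference is controlled by $(\tfrac12\|u_x\|_\infty+\|\tilde u_x\|_\infty)\|w\|_{L^2}^2$, finite since both solutions lie in $H^5\subset C^2$, so Gronwall with $w(\cdot,0)=0$ forces $w\equiv 0$.

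I expect the main obstacle to be the discrete fifth-order energy estimate. One must choose the difference discretization so that every dispersive operator is genuinely skew-symmetric on the periodic grid, so that the linear terms cancel exactly; and one must carry out the discrete summation by parts on the nonlinear term so that the apparent top-order loss from $D^k(uu_x)$ is recovered, uniformly in $\Dx$. This is the discrete counterpart of the ``no derivative loss'' phenomenon in the continuous energy method, and it is where the bulk of the technical work lies.
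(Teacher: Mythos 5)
Your proposal is correct in outline and shares the paper's skeleton (semi-discretization on a periodic grid with the skew-symmetric dispersive differences and the $\tfrac13(u\,\Dcx u+\Dcx(u^2))$ nonlinearity, discrete $L^2$ conservation, compactness via Arzel\`a--Ascoli, and the identical $L^2$--Gronwall uniqueness argument), but the mechanism by which you reach the crucial fifth-order bound is genuinely different. You propose to work entirely at the discrete level: a discrete Hamiltonian for an $H^2$-type bound, then direct energy estimates on $E_k=\Dx\sum_j(D^k u_j)^2$ for $k\le 5$, closed by discrete commutator/summation-by-parts arguments and Gronwall, yielding global-in-time bounds uniform in $\Dx$ in one pass. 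The paper instead differentiates the scheme in \emph{time}, derives a Riccati-type inequality $\tfrac{d}{dt}\norm{v}_h^2\le\nu_3\norm{v}_h^3+\nu_4\norm{v}_h^2$ for $v=u_t$ (hence only a \emph{local} existence time $T_1$), and recovers the bound on $\Dpx^3\Dmx^2 u$ algebraically from the equation itself rather than from a high-order energy identity; globalization is then done a posteriori on the continuous limit, using the conserved quantities $\int u^2$ and $\int(\tfrac13u^3-u_x^2-u_{xx}^2)$ to control $\max|u_x|$ and restart the local argument on successive time intervals. Your route, if the discrete $H^2$ and $H^5$ estimates can be closed uniformly in $\Dx$, is cleaner in that it avoids the restart procedure entirely; but those discrete estimates (in particular an exactly, or approximately, conserved discrete Hamiltonian and the fifth-order discrete Leibniz/commutator bookkeeping) are precisely the technical burden the paper sidesteps with the $u_t$ trick, and you correctly identify them as the unproven crux of your version. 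Be aware also that the paper's scheme does not conserve the discrete $L^2$ norm exactly --- it dissipates $O(\Dx)$ quantities --- so an exact discrete second invariant should not be expected; one would need it only up to errors vanishing with $\Dx$, which is an additional point your sketch should address.
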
 
The numerical computaion of solutions of the Kawahara equation is rather capricious. Two competing equations are involved, namely nonlinear convective term $uu_x$, which in the context of the equation $u_t = uu_x$ yields an infinite gradient in finite time even for smooth data, and the linear dispersive terms $u_{xxx}, u_{xxxxx}$, which in the context of the equation $u_t = u_{xxx} + u_{xxxxx}$ produces hard to compute dispersive waves, and these two effects combined makes it difficult to obtain accurate and fast numerical methods. Most of the finite difference schemes will consist of a sum of two terms, one discretizing the convective term and one discretizing the dispersive terms. These two effects will have to balance each other, as it is known that the Kawahara equation itself keeps the Sobolev norm $H^s (s>-1)$ bounded.

The rest of the paper is organized as follows: In section $2$, we consider the semi-discrete scheme for the initial-boundary value problem corresponding to \eqref{eq:kawahara}. At first, we state some of the well-known Sobolev type estimates and then we showed the convergence of the semi-discrete scheme. Both local and global existence has been proved. In section $3$, we consider a fully-discrete semi-implicit scheme for the initial-boundary value problem corresponding to \eqref{eq:kawahara}. We have used explicit discretization for the ``nonlinear'' term and implicit discretizations for the ``dispersive'' terms. Convergence of the fully-discrete scheme has been shown in this section. In section $4$, we have shown the uniqueness of the solution to the initial-boundary value problem given by \eqref{eq:kawahara}. Finally, in section $5$, we have justified the convergence results by several numerical experiments, namely Soliton experiments. We also compare our scheme, based on the fully-discrete semi-implicit scheme, with the existing results in literature.

\section{Semi Discrete Analysis}
\subsubsection{Local existence:}

Let $\Dx$ be a small positive number and define a grid on the X-axis to be the set of gridpoints $x_i=i\Dx$ for $i \in \Z$. We will denote the value of our difference approximation at $x_i$ as $u(x_i,t)= u_i$. Since we are in the periodic case, for simplicity we assume a unit period and that $1/\Dx \in \N$. In that case $u_{i+N} = u_i$ for $i \in \Z$. To simplify the notations, we will introduce the finite difference operators:
\begin{align*}
\Dmx u_i = \frac{1}{\Dx}(u_i - u_{i-1}), \quad \Dpx u_i = \frac{1}{\Dx}(u_{i+1} - u_{i}), \quad \Dcx u_i = \frac{1}{2\Dx}(u_{i+1} - u_{i-1}).
\end{align*}
We will also use the following notations:
\begin{equation*}
\begin{aligned}
\norm {f}^2 = (f,f) \quad \text{and} \quad (f,g)= \int_{0}^{1} \overline {f(x)} g(x)\,dx,
\end{aligned}
\end{equation*}
and in the space of gridfunctions ( a discrete, possibly complex valued, function defined on the grid ), we define the scalar product and the norm by
\begin{align*}
(f,g)_h = h\sum_{i} \overline {f(x_i)} g(x_i) \quad \text {and $\norm {f}^2_{h} = (f,f)_h$}.
\end{align*}
Later we need the following Lemmas, proven in \cite{sjo}. We begin with a well-known Sobolev-type Lemma:
\begin{lemma}
\label{lemma:Lemma-1}
Let $\sigma$ and $\tau$ be integers such that $0\le\tau < \sigma$. Then for every constant $\epsilon>0$ there exists a constant $c(\epsilon)$ such that for all functions $y$, sufficiently differentiable on $0 \le x \le 1$,
\begin{equation}
\label{eq:est1}
\begin{aligned}
\max_{0\le x\le 1} |\frac{{\partial}^{\tau} y}{{\partial} x^{\tau}}|^2 \le \epsilon \norm {\frac{{\partial}^{\sigma} y}{\partial x^{\sigma}}}^2 + c(\epsilon) \norm{y}^2 ,
\end{aligned}
\end{equation}
and 
\begin{equation}
\label{eq:est2}
\begin{aligned}
\norm {\frac{{\partial}^{\tau} y}{{\partial} x^{\tau}}}^2 \le \epsilon \norm {\frac{{\partial}^{\sigma} y}{\partial x^{\sigma}}}^2 + c(\epsilon) \norm{y}^2 .
\end{aligned}
\end{equation}
\end{lemma}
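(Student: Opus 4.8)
The statement is the classical Gagliardo--Nirenberg interpolation inequality for an intermediate derivative, packaged in the additive ``small-$\epsilon$ plus large-constant'' form. The plan is to separate the two ingredients that always underlie such a bound: a scale-invariant \emph{multiplicative} interpolation estimate controlling $\partial^\tau y$ by a geometric mean of $\partial^\sigma y$ and $y$, and an application of Young's inequality that trades this geometric mean for the additive form $\epsilon(\cdot)+c(\epsilon)(\cdot)$. Both \eqref{eq:est1} and \eqref{eq:est2} come out of the same machine; they differ only in the target norm ($L^\infty$ versus $L^2$) and hence in the interpolation exponent. Since the ambient problem is $1$-periodic, I would carry this out on the torus via Fourier series, where every step is transparent and the borderline case is handled cleanly; a general sufficiently-differentiable function on $[0,1]$ is then recovered by a smooth cutoff and periodic extension (equivalently, by the usual Gagliardo--Nirenberg argument in which the boundary terms from integration by parts are absorbed using the lower-order part of the embedding).

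For \eqref{eq:est2} I would expand $y=\sum_k \hat y_k e^{i\omega_k x}$ with $\omega_k=2\pi k$, so that Parseval gives $\norm{\partial^m y}^2=\sum_k \omega_k^{2m}\abs{\hat y_k}^2$ for each order $m$. The whole estimate then reduces to a single scalar inequality: for $0\le\tau<\sigma$ and each $\epsilon>0$ there is a finite $c(\epsilon)\ge 0$ with $s^{2\tau}\le\epsilon\,s^{2\sigma}+c(\epsilon)$ for all $s\ge 0$. This holds because $s\mapsto s^{2\tau}-\epsilon s^{2\sigma}$ is continuous, vanishes at $s=0$, and tends to $-\infty$ as $s\to\infty$ (here $2\sigma>2\tau$ is used), hence attains a finite maximum, which I take as $c(\epsilon)$. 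Applying this with $s=\abs{\omega_k}$, multiplying by $\abs{\hat y_k}^2$, and summing over $k$ yields \eqref{eq:est2}; this scalar inequality is precisely the Fourier incarnation of the $\epsilon$-Young inequality.

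For the sup bound \eqref{eq:est1} I would start from $\max_{0\le x\le 1}\abs{\partial^\tau y}\le\sum_k \abs{\omega_k}^\tau\abs{\hat y_k}$ and split the summand with a weight depending on an auxiliary parameter $\eta>0$. By Cauchy--Schwarz, $\max\abs{\partial^\tau y}^2\le A(\eta)\,\big(\eta\norm{\partial^\sigma y}^2+\norm{y}^2\big)$, where $A(\eta)=\sum_k \omega_k^{2\tau}/(\eta\,\omega_k^{2\sigma}+1)$. The key observation is that $A(\eta)$ is finite for every $\eta>0$ precisely because $\sigma-\tau\ge 1>\tfrac12$, which makes the tail exponent $2(\sigma-\tau)>1$ summable; this is the one place where the gap condition $\tau<\sigma$ is genuinely sharp (the $L^\infty$ endpoint sits exactly at $\sigma-\tau=\tfrac12$). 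It remains to produce the small coefficient in front of $\norm{\partial^\sigma y}^2$: writing $\eta A(\eta)=\sum_k \eta\,\omega_k^{2\tau}/(\eta\,\omega_k^{2\sigma}+1)$, each term is dominated by the summable bound $\omega_k^{-2(\sigma-\tau)}$ and tends to $0$ as $\eta\to0$, so dominated convergence gives $\eta A(\eta)\to0$. Given $\epsilon$, I then fix $\eta$ so small that $\eta A(\eta)\le\epsilon$ and set $c(\epsilon)=A(\eta)$, which delivers \eqref{eq:est1}.

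The routine parts are the Parseval identities and the dominated-convergence estimate; the only real obstacle is the borderline endpoint in the sup estimate, where the fixed constant $A(\eta)$ multiplying $\norm{\partial^\sigma y}^2$ must be made small without destroying finiteness. This is what forces the weighted Cauchy--Schwarz split together with the limit $\eta A(\eta)\to0$, rather than a single crude embedding giving only $\max\abs{\partial^\tau y}^2\le C\big(\norm{\partial^\sigma y}^2+\norm{y}^2\big)$ with a fixed constant $C$. For the non-periodic version on $[0,1]$ the corresponding difficulty migrates to the boundary terms $[\partial^{m-1}y\,\partial^m y]_0^1$ arising from integration by parts in the multiplicative estimate; I would control these by the inductive hypothesis (the same estimate at one lower order, inducting on the gap $\sigma-\tau$) or, more cheaply, by extending a cutoff of $y$ periodically and invoking the Fourier argument above, after which Young's inequality again supplies the additive small-$\epsilon$ form.
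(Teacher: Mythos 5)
Your argument is correct, but be aware that the paper itself contains no proof of this lemma: it is quoted verbatim from Sj\"{o}berg \cite{sjo} (``Later we need the following Lemmas, proven in \cite{sjo}''), and Sj\"{o}berg's proof is an elementary real-variable argument (fundamental theorem of calculus and integration by parts on subintervals of $[0,1]$, with induction on the order of the derivative), valid for arbitrary sufficiently differentiable $y$ with no periodicity assumed. Your route is genuinely different: working on the torus via Parseval, you reduce \eqref{eq:est2} to the scalar inequality $s^{2\tau}\le\epsilon s^{2\sigma}+c(\epsilon)$ for $s\ge 0$, and \eqref{eq:est1} to the weighted Cauchy--Schwarz bound $\max|\partial_x^\tau y|^2\le A(\eta)\bigl(\eta\norm{\partial_x^\sigma y}^2+\norm{y}^2\bigr)$ with $A(\eta)=\sum_k\omega_k^{2\tau}/(\eta\,\omega_k^{2\sigma}+1)$, which is finite because $2(\sigma-\tau)\ge 2>1$, while $\eta A(\eta)\to 0$ by dominated convergence; both steps check out, and your identification of $\sigma-\tau=\tfrac12$ as the true $L^\infty$ endpoint is a sharpening the paper never makes explicit. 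What the Fourier route buys is transparency, and since every function to which the paper actually applies the lemma is $1$-periodic (the solutions, the Fourier interpolants $\psi_{\Dx}$), it fully suffices for the paper's purposes; what Sj\"{o}berg's route buys is the lemma as literally stated, for non-periodic $y$ on $[0,1]$. Your reduction of that general case is the one soft spot: a cutoff-and-periodize step produces Leibniz terms involving all intermediate derivatives $\norm{\partial_x^j y}$, $0<j<\sigma$, so it is not ``cheap'' but requires precisely the inductive absorption (on the gap $\sigma-\tau$, or a prior additive bound $\norm{\partial_x^j y}\le C(\norm{\partial_x^\sigma y}+\norm{y})$ on the interval) that you mention only in passing; if you intend the lemma in its stated generality rather than the periodic case, that step should be written out in full.
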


\begin{lemma}
\label{lemma:lemma-2}
Let $\tau_1$ and $\tau_2$ be nonnegative integers with $\tau_1 + \tau_2 = \tau$ and $\psi$ be a function of the form
\begin{align*}
\psi = \sum_{\omega \in \Z} a(\omega) e^{2\pi i \omega x}.
\end{align*}
Then 
\begin{align*}
\left (\frac{2}{\pi} \right )^{2\tau} \norm {\frac{{\partial}^{\tau} \psi}{{\partial} x^{\tau}}}^2 \le \norm {\Dpx^{\tau_1}\Dmx^{\tau_2} \psi}^2
= \norm {\Dpx^{\tau_1}\Dmx^{\tau_2} \psi}^2_{h} \le \norm {\frac{{\partial}^{\tau} \psi}{{\partial} x^{\tau}}}^2.
\end{align*}
\end{lemma}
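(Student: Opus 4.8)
The plan is to diagonalize everything in Fourier space, where all three relations collapse to a comparison of a single scalar multiplier. First I would record how the difference operators act on the basic modes $e_\omega(x) := e^{2\pi i\omega x}$. A direct computation gives
\begin{align*}
\Dpx e_\omega = \frac{e^{2\pi i\omega\Dx}-1}{\Dx}\,e_\omega, \qquad \Dmx e_\omega = \frac{1-e^{-2\pi i\omega\Dx}}{\Dx}\,e_\omega,
\end{align*}
so that each $e_\omega$ is an eigenfunction and, using $\abs{e^{i\theta}-1}=2\abs{\sin(\theta/2)}$, one has $\abs{\Dpx e_\omega}=\abs{\Dmx e_\omega}=\frac{2}{\Dx}\abs{\sin(\pi\omega\Dx)}$ pointwise. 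Iterating and using $\tau_1+\tau_2=\tau$,
\begin{align*}
\Dpx^{\tau_1}\Dmx^{\tau_2}\psi = \sum_{\omega\in\Z} a(\omega)\,\mu(\omega)\,e_\omega, \qquad \abs{\mu(\omega)} = \left(\frac{2}{\Dx}\abs{\sin(\pi\omega\Dx)}\right)^{\tau},
\end{align*}
whereas $\pa_x^\tau\psi=\sum_\omega a(\omega)(2\pi i\omega)^\tau e_\omega$ carries the multiplier of modulus $(2\pi\abs{\omega})^\tau$. Thus the whole lemma reduces to comparing these two multipliers mode by mode.

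The two outer inequalities then follow from Jordan's inequality $\frac{2}{\pi}\theta\le\sin\theta\le\theta$ on $0\le\theta\le\frac{\pi}{2}$. The point I would exploit is that an $N$-periodic gridfunction, $N=1/\Dx$, carries only frequencies in a window of width $N$, so one may take $\theta:=\pi\abs{\omega}\Dx\in[0,\pi/2]$. On this range $\sin\theta\le\theta$ yields $\frac{2}{\Dx}\abs{\sin(\pi\omega\Dx)}\le 2\pi\abs{\omega}$, while $\sin\theta\ge\frac{2}{\pi}\theta$ yields $\frac{2}{\Dx}\abs{\sin(\pi\omega\Dx)}\ge\frac{2}{\pi}\cdot 2\pi\abs{\omega}$. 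Raising to the power $2\tau$, weighting by $\abs{a(\omega)}^2$, and summing, Parseval's identity on $[0,1]$ (which gives $\norm{\pa_x^\tau\psi}^2=\sum_\omega\abs{a(\omega)}^2(2\pi\abs{\omega})^{2\tau}$) turns these pointwise bounds into the two claimed inequalities.

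It remains to justify the middle equality $\norm{\Dpx^{\tau_1}\Dmx^{\tau_2}\psi}^2=\norm{\Dpx^{\tau_1}\Dmx^{\tau_2}\psi}^2_h$, which I expect to be the only delicate point. Here I would use that the equispaced (trapezoidal) quadrature reproduces $\int_0^1 e_m\,dx$ exactly, i.e. $\Dx\sum_i e_m(x_i)$ equals $1$ when $m\equiv 0\pmod N$ and $0$ otherwise. Expanding $\abs{\Dpx^{\tau_1}\Dmx^{\tau_2}\psi}^2$ as a double Fourier sum and applying this to each cross term shows that the discrete inner product and the $L^2$ inner product coincide precisely when no two active frequencies are congruent modulo $N$; this is exactly the band-limiting inherent in the gridfunction setting, and it is the hypothesis one must not lose sight of, since for a genuinely infinite series aliasing would break the identity. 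Granting it, both norms equal $\sum_\omega\abs{a(\omega)\mu(\omega)}^2$, and the full chain of (in)equalities closes.
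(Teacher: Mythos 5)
Your argument is correct, and it is essentially the standard Fourier-multiplier proof that the paper itself does not reproduce (the lemma is simply quoted from \cite{sjo}): diagonalize $\Dpx$ and $\Dmx$ on the modes $e^{2\pi i\omega x}$, compare the symbol $\bigl(\tfrac{2}{\Dx}\abs{\sin(\pi\omega\Dx)}\bigr)^{\tau}$ with $(2\pi\abs{\omega})^{\tau}$ via Jordan's inequality, and use exactness of the trapezoidal rule on low modes for the middle equality. Your observation that the lower bound and the identity $\norm{\cdot}=\norm{\cdot}_h$ both \emph{require} the frequencies to be confined to $\abs{\omega}\le N/2$ (otherwise $\Dpx e_{N}=0$ and aliasing destroys the equality) is a genuine and worthwhile catch, since the paper's restatement writes $\omega\in\Z$ without recording this band-limiting hypothesis, which is implicit in $\psi$ being the trigonometric interpolant of a gridfunction.
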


\begin{lemma}
\label{lemma:lemma-3}
Let $\sigma$ and $\tau$ be integers such that $0\le\tau < \sigma$, and $z$ be any gridfunction. Then for every constant $\epsilon>0$ there exists a constant $c(\epsilon)$ independent of $h$ and $z$ such that,
\begin{equation}
\label{eq:est3}
\begin{aligned}
\norm {\Dpx^{\tau} z}^2_{h} \le \epsilon \norm {\Dpx^{\sigma} z}^2_{h} + c(\epsilon) \norm{z}^2_{h}.
\end{aligned}
\end{equation}
\end{lemma}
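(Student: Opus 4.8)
The plan is to diagonalize the forward difference operator $\Dpx$ by discrete Fourier analysis and thereby reduce the whole inequality to an elementary scalar estimate. Since $z$ is an $N$-periodic gridfunction with $N=1/\Dx$, the space of such gridfunctions is $N$-dimensional, and the grid exponentials $e_\omega(x_j)=e^{2\pi i\omega x_j}$, $\omega=0,1,\dots,N-1$, form an orthonormal basis with respect to the discrete inner product $(\cdot,\cdot)_h$; indeed $(e_\omega,e_{\omega'})_h=\Dx\sum_{j}e^{2\pi i(\omega'-\omega)j\Dx}=\delta_{\omega\omega'}$ because $\Dx=1/N$. Writing $z=\sum_\omega\hat z(\omega)e_\omega$, each $e_\omega$ is an eigenfunction of $\Dpx$,
\begin{equation*}
\Dpx e_\omega=\mu_\omega e_\omega,\qquad \mu_\omega=\frac{e^{2\pi i\omega\Dx}-1}{\Dx},
\end{equation*}
so that $\Dpx^{k}z=\sum_\omega\mu_\omega^{k}\hat z(\omega)e_\omega$ for every integer $k\ge 0$.

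By orthonormality (discrete Parseval), the three quantities appearing in the lemma become weighted sums of the Fourier coefficients:
\begin{equation*}
\norm{\Dpx^{\tau}z}^2_h=\sum_\omega|\mu_\omega|^{2\tau}|\hat z(\omega)|^2,\qquad \norm{\Dpx^{\sigma}z}^2_h=\sum_\omega|\mu_\omega|^{2\sigma}|\hat z(\omega)|^2,\qquad \norm{z}^2_h=\sum_\omega|\hat z(\omega)|^2.
\end{equation*}
Hence \eqref{eq:est3} will follow term by term once I establish the scalar inequality $s^{\tau}\le\epsilon\,s^{\sigma}+c(\epsilon)$ for every real $s\ge 0$, applied with $s=|\mu_\omega|^2$. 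For $\tau=0$ this is immediate with any $c(\epsilon)\ge 1$, and for $1\le\tau<\sigma$ it is exactly Young's inequality with conjugate exponents $\sigma/\tau$ and $\sigma/(\sigma-\tau)$: writing $s^{\tau}=(a\,s^{\sigma})^{\tau/\sigma}a^{-\tau/\sigma}$ and choosing $a$ so that $(\tau/\sigma)a=\epsilon$ yields the bound with $c(\epsilon)=\tfrac{\sigma-\tau}{\sigma}(\epsilon\sigma/\tau)^{-\tau/(\sigma-\tau)}$.

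The crucial point, and the only place where one must be careful, is that this constant $c(\epsilon)$ depends only on $\epsilon$, $\tau$, and $\sigma$, and in particular not on $s$. Because the scalar inequality holds uniformly for all $s\ge 0$, it is irrelevant that the actual eigenvalues $|\mu_\omega|^2$ depend on the mesh size $\Dx$; summing the term-by-term estimate against the nonnegative weights $|\hat z(\omega)|^2$ immediately produces \eqref{eq:est3} with a constant independent of both $h$ and $z$, which is precisely what the statement demands.

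An alternative, which ties the result directly to the preceding lemmas, is to interpolate $z$ by a band-limited trigonometric polynomial $\psi$ agreeing with $z$ at the grid points. Since $\Dpx^{k}\psi$ and $\Dpx^{k}z$ coincide on the grid and $\psi$ is band-limited, the aliasing-free identity $\norm{\psi}=\norm{\psi}_h$ holds; one then sandwiches the discrete derivative norms between the continuous ones via Lemma \ref{lemma:lemma-2}, applies the continuous interpolation estimate \eqref{eq:est2} of Lemma \ref{lemma:Lemma-1} to $\psi$, and rescales $\epsilon$ by the $h$-independent factor $(\pi/2)^{2\sigma}$ coming from Lemma \ref{lemma:lemma-2}. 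Either route gives the same conclusion; I would present the Fourier argument as the main proof, since it is self-contained and makes the $h$-independence of $c(\epsilon)$ transparent.
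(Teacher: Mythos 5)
Your proof is correct, but note that the paper itself contains no proof of Lemma~\ref{lemma:lemma-3}: it is quoted from Sj\"{o}berg \cite{sjo} (``proven in \cite{sjo}''), and the argument intended there is precisely your ``alternative'' route --- interpolate $z$ by a band-limited trigonometric polynomial $\psi$, use $\norm{\psi}_h=\norm{\psi}$ and the two-sided bounds of Lemma~\ref{lemma:lemma-2} to pass between discrete and continuous derivative norms, apply \eqref{eq:est2} of Lemma~\ref{lemma:Lemma-1} to $\psi$, and absorb the $h$-independent factor $(\pi/2)^{2\sigma}$ into $\epsilon$. Your primary argument is a genuinely different and more self-contained route: diagonalizing $\Dpx$ on the orthonormal grid exponentials and reducing \eqref{eq:est3} to the scalar Young inequality $s^{\tau}\le \epsilon s^{\sigma}+c(\epsilon)$, applied with $s=|\mu_\omega|^2$, bypasses Lemmas~\ref{lemma:Lemma-1} and \ref{lemma:lemma-2} altogether, produces the explicit constant $c(\epsilon)=\frac{\sigma-\tau}{\sigma}\left(\epsilon\sigma/\tau\right)^{-\tau/(\sigma-\tau)}$, and makes the $h$- and $z$-independence of $c(\epsilon)$ transparent, since the scalar bound is uniform in $s\ge 0$ while only the eigenvalues $\mu_\omega=(e^{2\pi i\omega\Dx}-1)/\Dx$ see the mesh; your orthonormality computation, the eigenvalue relation, and the Young step with exponents $\sigma/\tau$ and $\sigma/(\sigma-\tau)$ all check out, including the trivial case $\tau=0$. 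Your route also yields two dividends the citation does not: since $|\mu_\omega|=2|\sin(\pi\omega\Dx)|/\Dx$ equals the modulus of the symbol of $\Dmx$ and dominates that of $\Dcx$ (as $|\sin(2\pi\omega\Dx)|\le 2|\sin(\pi\omega\Dx)|$), the same term-by-term estimate proves the extension stated in Remark~\ref{remark-1}; and it explains the counterexample asserted there, because the symbol of $\Dcx$ vanishes at the top frequency $\omega=N/2$ (e.g.\ $z_j=(-1)^j$ has $\Dcx z=0$ while $\norm{\Dpx z}_h=2/\Dx\to\infty$), which is exactly why $\Dcx$ may not appear on the right-hand side. What the interpolation route buys instead is economy within the paper's framework --- it reuses the machinery already needed for the construction of $\psi_{\Dx}$ in the convergence argument --- at the cost of a non-explicit $c(\epsilon)$.
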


\begin{remark}
\label{remark-1}
The inequality \eqref{eq:est3} can be modified as follows: One can replace $\Dpx^{\tau}$ by any operator $\Dpx^{\tau_1}\Dcx^{\tau_2}\Dmx^{\tau_3}$ with $\tau =\tau_1 +\tau_2 +\tau_3$. The right-hand side can likewise be replaced by $ \Dpx^{\sigma_1}\Dmx^{\sigma_2}$ with $\sigma = \sigma_1 + \sigma_2$. However, it can be proven by counterexamples that the lemma is not true if the right member contains $\Dcx$.
\end{remark}

\begin{remark}
\label{remark-2}
From the above Lemmas the folloing result follows immediately
\begin{equation}
\label{eq:est4}
\begin{aligned}
\max_{i} |\Dcx z_i|^2 \le \epsilon \norm{ \Dpx^3 \Dmx^2 z}^2_{h} + c(\epsilon) \norm{z}^2_{h},
\end{aligned}
\end{equation}
where $\epsilon$ and $c(\epsilon)$ have the same properties as before.
\end{remark}

Now we are in a position to state the semi-discrete scheme of the Kawahara equation \eqref{eq:kawahara}, given by
\begin{equation}
\begin{aligned}
\label{eq:discrete}
(u_i)_t & = - \frac{1}{3}[ u_i \Dcx u_i  + \Dcx u_i^2] - \Dmx \Dpx^{2} u_i + \Dpx^3 \Dmx^2 u_i, \quad \text{$ i= 1,2,...,N$}, 
\end{aligned}
\end{equation}
with the initial condition 
\begin{equation}
\begin{aligned}
\label{eq:discrete_initial}
u_i(0)= f(x_i), \quad \text{$ i= 1,2,...,N$}
\end{aligned}
\end{equation}
and the boundary condition
\begin{equation}
\begin{aligned}
\label{eq:discrete_boundary}
u_i(t)= u_{i+N}(t) \quad \text{ for all $i$ and $t$}
\end{aligned}.
\end{equation}

We will first prove the local existence of a solution for $t>0$. The case $t<0$ will be treated later.
 
\begin{theorem}
\label{theo:localexistence}
There exists a time $T_1>0$ and constants $k_i$, $i= 0,1,2,3,4$ independent of $\Dx$ but dependent on $f(x)$ and its derivatives of order five and lower, such that
\begin{equation}
\label{eq:exis1}
\begin{aligned}
\norm {u(\cdot,t)}_{h} \le k_0, \qquad \text {for all $t$}
\end{aligned}
\end{equation}
\begin{equation}
\label{eq:exis2}
\begin{aligned}
\qquad \qquad \qquad \qquad | u(x_i, t)|       \le k_1, \qquad  \text {$0 \le t \le T_1$}, \qquad \text {for all $i$}
\end{aligned}
\end{equation} 
\begin{equation}
\label{eq:exis3}
\begin{aligned}
\norm {\Dmx \Dpx^{2} u(\cdot,t)}_{h}  \le k_2, \qquad \text {$0 \le t \le T_1$}
\end{aligned}
\end{equation} 
\begin{equation}
\label{eq:exis4}
\begin{aligned}
\norm {\Dpx^{3} \Dmx^{2} u(\cdot,t)}_{h}  \le k_3, \qquad \text {$0 \le t \le T_1$}
\end{aligned}
\end{equation} 
and finally with,
\begin{equation*}
\begin{aligned}
v(x,t) &= \frac{\partial u(x,t)}{\partial t} 
\end{aligned}
\end{equation*}
\begin{equation}
\label{eq:exis5}
\begin{aligned}
\qquad \qquad \norm {v(\cdot,t)}_{h}  \le k_4,\qquad \text {$0 \le t \le T_1$}.
\end{aligned}
\end{equation} 
\end{theorem}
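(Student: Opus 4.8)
The plan is to treat the semi-discrete system \eqref{eq:discrete}--\eqref{eq:discrete_boundary} as a finite system of ODEs in the $N$ unknowns $u_1(t),\dots,u_N(t)$ whose right-hand side is polynomial, hence locally Lipschitz; Picard--Lindel\"of then yields a unique solution on a maximal interval, and the real content of the theorem is to produce a priori bounds, with constants independent of $\Dx$, that prevent blow-up up to a fixed time $T_1$. I would establish the estimates in the order \eqref{eq:exis1}, \eqref{eq:exis4}, \eqref{eq:exis3}, \eqref{eq:exis2}, \eqref{eq:exis5}, since the top-order energy bound drives everything else.

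For \eqref{eq:exis1} I would form $\frac{d}{dt}\norm{u}_h^2 = 2(u_t,u)_h$ and substitute the scheme. The nonlinear contribution $-\frac{1}{3}(u\Dcx u + \Dcx u^2, u)_h$ vanishes identically: since $\Dcx$ is skew-adjoint under $(\cdot,\cdot)_h$, discrete summation by parts gives $(u\Dcx u,u)_h = (u^2,\Dcx u)_h = -(\Dcx u^2,u)_h$, so the two pieces cancel---this is precisely the purpose of the conservative $\frac{1}{3}$-splitting. For the dispersive pieces, repeated summation by parts shows $-(\Dmx\Dpx^2 u,u)_h \le 0$ and $(\Dpx^3\Dmx^2 u,u)_h \le 0$ (the asymmetric one-sided stencils supply numerical dissipation with the stabilizing sign). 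Hence $\frac{d}{dt}\norm{u}_h^2 \le 0$ and $\norm{u(\cdot,t)}_h \le \norm{f}_h =: k_0$ for all $t\ge 0$.

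The core is \eqref{eq:exis4}. I would differentiate $E(t):=\norm{\Dpx^3\Dmx^2 u}_h^2$ and substitute the scheme. Because $\Dpx^3\Dmx^2$ commutes with the dispersive operators, their contributions again reduce to $(\Dmx\Dpx^2 w,w)_h$ and $(\Dpx^3\Dmx^2 w,w)_h$ with $w=\Dpx^3\Dmx^2 u$ and carry the same non-positive signs as above. The nonlinear term $-\frac{1}{3}(\Dpx^3\Dmx^2[u\Dcx u + \Dcx u^2],\Dpx^3\Dmx^2 u)_h$ is expanded by the discrete Leibniz rule; after summation by parts the genuinely top-order piece collapses to a term of the form $(\text{const})\,(\Dcx u\cdot w,w)_h$ plus commutator remainders, all of which I would bound by $C\bigl(1+\max_i\abs{u_i}+\max_i\abs{\Dcx u_i}\bigr)E + (\text{lower-order terms controlled by } k_0)$. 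I would then close the estimate using the discrete Sobolev inequalities (Lemma~\ref{lemma:lemma-3} and Remark~\ref{remark-2}): $\max_i\abs{u_i}^2$ and $\max_i\abs{\Dcx u_i}^2$ are each $\le \eps E + c(\eps)k_0^2$, which turns the differential inequality into a Riccati-type bound $E' \le a E^{3/2}+bE+c$ whose coefficients depend only on $k_0$. Since $E(0)=\norm{\Dpx^3\Dmx^2 f}_h^2 \le \norm{f^{(5)}}^2$ by Lemma~\ref{lemma:lemma-2}, this inequality yields a finite $T_1>0$ and a bound $E(t)\le k_3^2$ on $[0,T_1]$, both independent of $\Dx$.

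The remaining estimates are then interpolation and bookkeeping. For \eqref{eq:exis3} and \eqref{eq:exis2} I would apply Lemma~\ref{lemma:lemma-3} (with Remark~\ref{remark-1} to accommodate the mixed operators) together with the discrete Sobolev inequality to obtain $\norm{\Dmx\Dpx^2 u}_h^2 \le \eps k_3^2 + c(\eps)k_0^2 =: k_2^2$ and $\max_i\abs{u_i}^2 \le \eps k_3^2 + c(\eps)k_0^2 =: k_1^2$ on $[0,T_1]$. Finally, \eqref{eq:exis5} follows by reading $v=u_t$ directly off \eqref{eq:discrete} and using the triangle inequality, $\norm{v}_h \le \frac{1}{3}\norm{u\Dcx u+\Dcx u^2}_h + \norm{\Dmx\Dpx^2 u}_h + \norm{\Dpx^3\Dmx^2 u}_h$, where the nonlinear norm is controlled by $k_1$ times an interpolated bound on $\norm{\Dcx u}_h$; every term is then dominated by the constants already produced, giving $k_4$. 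The main obstacle is the nonlinear term in the estimate for \eqref{eq:exis4}: one must verify that the discrete product-rule expansion leaves no uncontrollable top-order contribution, so that the worst surviving factor is $\max_i\abs{\Dcx u_i}$ (the discrete analogue of $\int u_x(\partial_x^5 u)^2\,dx$), and one must track every constant carefully to confirm it is independent of $\Dx$.
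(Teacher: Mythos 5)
Your outline is sound for \eqref{eq:exis1} (identical to the paper: the $\tfrac13$-splitting kills the nonlinear term and the one-sided dispersive stencils contribute non-positive terms --- in fact the paper computes them exactly as $-\tfrac{\Dx}{2}\norm{\Dpx\Dmx u}_h^2-\tfrac{\Dx}{2}\norm{\Dmx\Dpx^2 u}_h^2$), but at the central step you take a genuinely different route. You propose a direct fifth-order energy estimate, differentiating $E(t)=\norm{\Dpx^3\Dmx^2 u}_h^2$ and expanding $\Dpx^3\Dmx^2[u\Dcx u+\Dcx u^2]$ by a discrete Leibniz rule, closing a Riccati inequality for $E$, and then reading off $\norm{v}_h$ from the scheme. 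The paper (following Sj\"oberg) inverts this order: it first uses the scheme itself plus Lemma~\ref{lemma:lemma-3} to get the purely algebraic bound $\norm{\Dpx^3\Dmx^2 u}_h\le\nu_1\norm{v}_h+\nu_2$, then differentiates the scheme \emph{in time} to obtain an exact equation for $v=u_t$ whose energy estimate involves only $\max|\Dcx u|$ and $\max|\Dpx u|$; interpolating these against $\norm{\Dpx^3\Dmx^2 u}_h$ and substituting the algebraic bound yields the Riccati inequality $\frac{d}{dt}\norm{v}_h^2\le\nu_3\norm{v}_h^3+\nu_4\norm{v}_h^2$, from which $T_1$ and \eqref{eq:exis5} follow, and then \eqref{eq:exis4}, \eqref{eq:exis3}, \eqref{eq:exis2} drop out. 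The trade-off is real: your route is the standard $H^s$ energy method and should close at the same $E^{3/2}$ power, but the step you yourself flag as the main obstacle --- verifying that the discrete Leibniz expansion of a fifth-order difference of the nonlinearity leaves only a $\max_i|\Dcx u_i|\,E$ top-order term plus commutators controllable by Lemma~\ref{lemma:lemma-3} --- is a substantial computation with shift-laden product rules that you assert rather than carry out, and it is exactly the computation the paper's time-differentiation trick is designed to avoid (differentiating the ODE system in $t$ is exact and produces no commutators). If you complete that Leibniz bookkeeping, your argument works and even gives the slightly stronger statement that $E$ itself satisfies a closed differential inequality; as written, the paper's proof is the shorter and safer path.
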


\begin{proof}
Multiply the above equation \eqref{eq:discrete} by $\Dx u_i$ and summing over all $i$, we have
\begin{align*}
(u,u_t)_h & = -\frac{1}{3} [ (u, u \Dcx u)_h  + (u, \Dcx u^2)_h ] - (u,  \Dmx \Dpx^2 u)_h + (u,  \Dpx^3 \Dmx^2 u)_h \\
& = - \frac{\Dx}{2} \norm {\Dpx \Dmx u}^2_h - \frac{\Dx}{2} \norm {\Dmx \Dpx^2 u}^2_h
\end{align*}
where we are using the following identities:
\begin{equation}
\label{eq:ineq}
\begin{aligned}
 (u,  \Dmx \Dpx^2 u)_h & = \frac{1}{2}(u, \Dmx \Dpx^2 u)_h - \frac{1}{2}(u, \Dpx \Dmx^2 u)_h  \\
& = \frac{1}{2} \left( u, \Dpx \Dmx ( \Dpx -\Dmx)u \right)_h\\
& = \frac{\Dx}{2} \norm {\Dpx \Dmx u}^2_h ,
\end{aligned}
\end{equation}
since $ (u, \Dpx \Dmx^2u)_h = - (u, \Dmx \Dpx^2u)_h$, because $(u, \Dpx u)_h= -(u, \Dmx u)_h$, in the first line, and 
\begin{align*}
\Dpx -\Dmx = \Dx \Dmx \Dpx = \Dx \Dpx \Dmx.
\end{align*}
In a similar manner, we find that
\begin{align*}
(u, \Dpx^3 \Dmx^2 u)_h & = \frac{1}{2}(u, \Dpx^3 \Dmx^2 u)_h - \frac{1}{2}(u, \Dmx^3 \Dpx^2 u)_h  \\
& = \frac{1}{2} \left( u, \Dmx^2 \Dpx^2 ( \Dpx -\Dmx)u \right)_h\\
& = - \frac{\Dx}{2} \norm {\Dmx \Dpx^2 u}^2_h .
\end{align*} 
So, finally we conclude that
\begin{align*}
\norm {u(\cdot, t)}^2_h  & + \frac{\Dx}{2} \int_{0}^{t}\left(\norm {\Dpx \Dmx u(\tau)}^2_h  + \norm {\Dmx \Dpx^2 u(\tau)}^2_h \right) \,d\tau \\
& \le \norm {u(\cdot, 0)}^2_h  = \norm{f}^2_h \le 2\int_{0}^{1} f^2(x)\,dx = k_0^2, 
\end{align*}
from which \eqref{eq:exis1} follows.

Keeping in mind that $ v = u_t $, from the equation \eqref{eq:discrete} using the triangle inequality and Lemma $2.3$, we get the following inequality
\begin{align*}
 \norm {\Dpx^3 \Dmx^2 u}_h  & \leq \norm v_h + \frac{1}{3} \norm{[ u \Dcx u + \Dcx u^2 ]}_h + \norm {\Dmx \Dpx^2 u}_h\\
& \le \norm v_h + \max |\Dcx u| \norm u_h + ( \epsilon \norm {\Dpx^3 \Dmx^2 u}_h + c(\epsilon) \norm u_h)\\
& \le \norm v_h + \norm u_h ( \epsilon \norm {\Dpx^3 \Dmx^2 u}_h + c(\epsilon) \norm u_h ) \\
& \qquad \qquad +( \epsilon \norm {\Dpx^3 \Dmx^2 u}_h + c(\epsilon) \norm u_h )
\end{align*}
Now we can use the fact that $\norm u_h \le k_0$, and choose $\epsilon$ such that 
\begin{equation}
\label{eq:fifthorder}
\begin{aligned}
\norm {\Dpx^3 \Dmx^2 u}_h \le \nu_1 \norm v_h + \nu_2 ,
\end{aligned}
\end{equation}
where $\nu_1$ and $\nu_2$ are constants independent of $\Dx$.
To get a bound for $\norm v_h$, we will proceed as follows. After differentiating both sides of the equation \eqref{eq:discrete} with respect to $t$, we get the following equation
\begin{equation}
\label{eq:discrete1}
\begin{aligned}
(v_i)_t & = - \frac{1}{3} [ v_i \Dcx u_i + u_i \Dcx v_i + 2 \Dcx (u_iv_i)] - \Dmx \Dpx^2  v_i  +  \Dpx^3 \Dmx^2  v_i .
\end{aligned}
\end{equation}
Now multiplying equation \eqref{eq:discrete1} by $hv_i$ and summing over $i$, we have
\begin{equation*}
\begin{aligned}
(v, v_t)_h & = - \frac{1}{3} [ (v ,v \Dcx u)_h + (v, u \Dcx v)_h + 2 ( v, \Dcx uv )_h]\\
& \qquad \qquad - ( v, \Dmx \Dpx^2  v)_h  +   ( v, \Dpx^3 \Dmx^2  v)_h\\
& \le \frac{1}{3}| (v^2, \Dcx u)_h + ( v, \Dcx uv )_h| - ( v, \Dmx \Dpx^2  v)_h  + ( v, \Dpx^3 \Dmx^2  v)_h\\
& \le \frac{1}{3}| (v^2, \Dcx u)_h + \frac{\Dx}{2}( v, \Dpx u \Dcx v )_h + \frac{1}{2}( v, v_{j-1}\Dpx u )_h| \\
& \le C \left [ \max |\Dcx u| \norm v^2_h + \max |\Dpx u| \norm v^2_h \right ] \\
& \le C \norm v^2_h \left [ \epsilon \norm{ \Dpx^3 \Dmx^2  u}_h + c(\epsilon) k_0 \right ],
\end{aligned}
\end{equation*}
where $C$ is a constant independent of $\Dx$. Hence we can conclude that, using \eqref{eq:fifthorder} 
\begin{equation}
\label{eq:discrete2}
\begin{aligned}
\frac{d}{dt} \norm v^2_h & \le \nu_3  \norm v^3_h + \nu_4  \norm v^2_h,
\end{aligned}
\end{equation}
where $ \nu_3 $ and $\nu_4$ are constants independent of $\Dx$.
So, from \eqref{eq:discrete2}, it is clear that 
\begin{align*}
0\le \norm v^2_h \le y,
\end{align*} 
where $y$ is the solution of the initial-value problem
\begin{equation}
\label{eq:initialvalue}
\begin{aligned} 
\frac{dy}{dt} =  \nu_3 y^{\frac{3}{2}} + \nu_4 y, \quad y(0)= y_0 \ge \norm{v(\cdot, 0)}_h^2.
\end{aligned}
\end{equation}
It can be shown that the solution of \eqref{eq:initialvalue} is finite for 
\begin{align*}
t \le t_{\infty} = \frac{2}{\nu_4} \log ( 1 + \frac{\nu_4}{\nu_3 y_0}).
\end{align*}
There is no possibility of choosing constants scuh that $t_{\infty} = \infty$. However, we can always find a constant $C$ such that \eqref{eq:exis5} holds for $0\le t \le T_1 = t_{\infty}/2$. The estimate \eqref{eq:exis4} then follows from \eqref{eq:fifthorder}. Also the estimates \eqref{eq:exis2} and \eqref{eq:exis3} follows from Lemma~\ref{lemma:lemma-3} and \eqref{eq:exis4}. This completes the proof of the theorem. 
\end{proof}

Now we can use the Theorem ~\ref{theo:localexistence} to conclude that the Kawahara equation \eqref{eq:kawahara} has a solution $u(x,t)$ in $0 \le t\le T_1$. To do that, let us first assume that $u_{\Dx}$ is a grid function, i.e., a function $u_{\Dx}: [0,T]\rightarrow \R^{N}$, where $u_{\Dx}(x_i,t)= u_i(t)$, such that $u_0 = u_N$. We will also denote $\psi_{\Dx}(x,t)$ to be the Fourier series of $u_{\Dx}(x,t)$. Now we are ready to use the Lemma ~\ref{lemma:lemma-2} and ~\ref{theo:localexistence} to conclude immediately that
\begin{equation}
\label{eq:eq1}
\begin{aligned}
\norm {\frac{ \partial \psi_{\Dx}(.,t)}{\partial t}}^2 \le c_1, \quad \text{$0 \le t \le T_1$},
\end{aligned}
\end{equation}
\begin{equation}
\label{eq:eq2}
\begin{aligned}
\norm {\frac{\partial^3 \psi_{\Dx}(.,t)}{\partial x^3}}^2 \le c_2, \quad \text{$0 \le t \le T_1$},
\end{aligned}
\end{equation}
\begin{equation}
\label{eq:eq3}
\begin{aligned}
\norm {\frac{\partial^5 \psi_{\Dx}(.,t)}{\partial x^5}}^2 \le c_3, \quad \text{$0 \le t \le T_1$},
\end{aligned}
\end{equation}
where $c_1$, $c_2$ and $c_3$ are constants depending on $f(x)$ but not on $\Dx$. Now the above inequalities \eqref{eq:eq1} and \eqref{eq:eq2} imply that the sequence ${\lbrace \psi_{\Dx} \rbrace}_{\Dx>0}$ is bounded and equicontinuous in both $x$ and $t$. The Arzela-Ascoli theorem guarantees the existence of a subsequence of ${\lbrace \psi_{\Dx} \rbrace}_{\Dx>0}$ which converges to some function $u(x,t)$ as $\Dx \rightarrow 0$. The rest of the argument is standard. Both $\frac{\partial^3 \psi_{\Dx}}{\partial x^3}$ and $\frac{\partial^5 \psi_{\Dx}}{\partial x^5}$ converges to $\frac{\partial^3 u}{\partial x^3}$ and $\frac{\partial^3 u}{\partial x^3}$ respectively in the $L^2$ sense. It is also easy to see that $u(x,t)$ is a solution of the Kawahara equation because of the very definition of $\psi_{\Dx}(x,t)$.

\subsubsection{ Proof of Global existence}
In this section we are going to prove the existence of the global solution of the equation \eqref{eq:kawahara}. At first we are going to prove the following Lemmas, which we will use later.
\begin{lemma}
\label{lemma-4}
Let $ u(x,t)$ be a solution of the problem \eqref{eq:kawahara}. Then there exist a constants $\alpha_1, \alpha_2$ such that
\begin{equation}
\label{eq:cont1}
\begin{aligned}
\int_{0}^{1} u^2(x,t)\,dx = \int_{0}^{1} u^2(x,0)\,dx = \int_{0}^{1} f^2\,dx = \alpha_1 
\end{aligned}
\end{equation}
\begin{equation}
\label{eq:cont2}
\begin{aligned}
\int_{0}^{1} \left( \frac{1}{3} u^3 -  u_x^2 - u_{xx}^2 \right)\,dx  = \int_{0}^{1} \left(\frac{1}{3} f^3- {f'}^2 -  {f''}^2 \right)\,dx = \alpha_2
\end{aligned}
\end{equation}
\end{lemma}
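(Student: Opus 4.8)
The plan is to establish these two conservation laws as \emph{a priori} identities that follow directly from the structure of the Kawahara equation \eqref{eq:kawahara}, using integration by parts and the periodicity of $u$ in $x$ (which ensures all boundary terms over $[0,1]$ vanish). Throughout I assume $u(\cdot,t)$ is smooth enough (in fact $u \in H^5$ per Theorem~\ref{theo:maintheorem}) for the manipulations to be justified, and that the time derivative passes inside the spatial integral.

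For \eqref{eq:cont1}, I would differentiate $\int_0^1 u^2\,dx$ in time, substitute $u_t = -uu_x - u_{xxx} + u_{xxxxx}$, and show each resulting term integrates to zero. First, $\frac{d}{dt}\int_0^1 u^2\,dx = 2\int_0^1 u u_t\,dx = -2\int_0^1 u(uu_x + u_{xxx} - u_{xxxxx})\,dx$. The convective term gives $\int_0^1 u^2 u_x\,dx = \tfrac13\int_0^1 (u^3)_x\,dx = 0$ by periodicity. For the dispersive terms I integrate by parts: $\int_0^1 u\,u_{xxx}\,dx = -\int_0^1 u_x u_{xx}\,dx = -\tfrac12\int_0^1 (u_x^2)_x\,dx = 0$, and similarly $\int_0^1 u\,u_{xxxxx}\,dx$ integrates to zero after moving derivatives onto $u$ and using periodicity (it equals $\int_0^1 u_{xx}u_{xxx}\,dx = \tfrac12\int_0^1(u_{xx}^2)_x\,dx = 0$). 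Hence the time derivative vanishes, giving the constant-in-time $L^2$ mass $\alpha_1$.

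For \eqref{eq:cont2}, I would differentiate the functional $E(t) = \int_0^1\big(\tfrac13 u^3 - u_x^2 - u_{xx}^2\big)\,dx$ and again show $E'(t)=0$. Differentiating yields $E'(t) = \int_0^1\big(u^2 u_t - 2u_x u_{xt} - 2u_{xx}u_{xxt}\big)\,dx$; after integrating the last two terms by parts in $x$ to move all time derivatives onto a single factor, this becomes $\int_0^1\big(u^2 + 2u_{xx} + 2u_{xxxx}\big)u_t\,dx$. Now substitute $u_t = -uu_x - u_{xxx} + u_{xxxxx}$ and expand into a sum of products; each product is either a perfect $x$-derivative or can be reduced to one by integration by parts, so every term vanishes by periodicity. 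I expect the main obstacle here to be the bookkeeping: the integrand $(u^2 + 2u_{xx} + 2u_{xxxx})(-uu_x - u_{xxx} + u_{xxxxx})$ produces nine terms, and the cancellation relies on carefully pairing them — for instance the cross terms such as $\int_0^1 u^2 u_{xxxxx}\,dx$ and $\int_0^1 u_{xxxx}uu_x\,dx$ must be shown to combine to a total derivative rather than vanishing individually. The cleanest route is to verify that $\tfrac13 u^3 - u_x^2 - u_{xx}^2$ is precisely the conserved Hamiltonian density for \eqref{eq:kawahara}, so that $E'(t)$ equals the integral of an exact $x$-derivative; once that flux form is identified, periodicity closes the argument.

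Finally, evaluating both conserved quantities at $t=0$ and invoking the initial condition $u(x,0)=f(x)$ gives the stated equalities with $f$, $f'$, $f''$ on the right-hand side, completing the proof.
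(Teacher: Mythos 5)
Your treatment of \eqref{eq:cont1} is exactly the paper's: multiply by $u$, write each term as an exact $x$-derivative, and invoke periodicity. For \eqref{eq:cont2} you take a somewhat different route. The paper multiplies the equation by $u^2$ and then uses the PDE a \emph{second} time, replacing $uu_x$ by $-u_t-u_{xxx}+u_{xxxxx}$ inside the terms $2\int(uu_x)u_{xx}\,dx$ and $-2\int(uu_x)u_{xxxx}\,dx$, which regenerates $\frac{d}{dt}\int(u_x^2+u_{xx}^2)\,dx$ and closes the identity. You instead differentiate the functional $E(t)$ directly and substitute the PDE only once, relying on pairwise cancellation among the resulting nine products. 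Both organizations are legitimate, and yours avoids the paper's slightly opaque double use of the equation.

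However, there is a concrete sign error at the crux of your version. Integrating $-2\int_0^1 u_{xx}u_{xxt}\,dx$ by parts twice in $x$ gives $-2\int_0^1 u_{xxxx}u_t\,dx$, not $+2\int_0^1 u_{xxxx}u_t\,dx$; the correct identity is
\begin{equation*}
E'(t)=\int_0^1\left(u^2+2u_{xx}-2u_{xxxx}\right)u_t\,dx .
\end{equation*}
With the sign as you wrote it, the two cross terms you single out do \emph{not} combine into a total derivative: $-2\int_0^1 u_{xxxx}\,uu_x\,dx=-\int_0^1 u_{xxxx}(u^2)_x\,dx=\int_0^1 u^2u_{xxxxx}\,dx$ would \emph{add} to the $\int_0^1 u^2u_{xxxxx}\,dx$ coming from $u^2\cdot u_{xxxxx}$, leaving $2\int_0^1 u^2u_{xxxxx}\,dx\neq 0$ in general, so the claim that ``every term vanishes by periodicity'' fails as stated. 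With the corrected multiplier these two contributions cancel exactly, $-\int_0^1 u^2u_{xxx}\,dx$ cancels against $2\int_0^1 u_{xx}\,uu_x\,dx=-\int_0^1 u^2 u_{xxx}\,dx$ taken with its proper sign, and each remaining product ($u^3u_x$, $u_{xx}u_{xxx}$, $u_{xx}u_{xxxxx}$, $u_{xxx}u_{xxxx}$, $u_{xxxx}u_{xxxxx}$) reduces to a perfect $x$-derivative. So your argument goes through once the sign is fixed; the evaluation at $t=0$ is then as in the paper.
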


\begin{proof}
To prove \eqref{eq:cont1} we start by multyplying the equation \eqref{eq:kawahara} by $u $ and integrate by parts in space, yields
\begin{align*}
\int_{0}^{1} uu_t \,dx & = \int_{0}^{1} -u^2 u_x - uu_{xxx} +  uu_{xxxxx} \,dx \\
& = -\int_{0}^{1} (\frac{1}{3} u^3)_x \,dx - \int_{0}^{1} (uu_{xx} - \frac{1}{2}u_x^2)_x \,dx - \int_{0}^{1} u_x u_{xxxx}\,dx \\
&= -\int_{0}^{1} (\frac{1}{3} u^3)_x \,dx - \int_{0}^{1} (uu_{xx} - \frac{1}{2}u_x^2)_x \,dx \\
& \qquad - \int_{0}^{1} (u_xu_{xxx} - \frac{1}{2}u_{xx}^2)_x \,dx = 0.
\end{align*}
Since all the boundary terms vanishes due to the periodic boundary condition. Hence this established \eqref{eq:cont1}. 

To prove \eqref{eq:cont2}, we start by multyplying \eqref{eq:kawahara} by $u^2 $ and integrate by parts in space, yields
\begin{align*}
\int_{0}^{1} u^2u_t \,dx & = \int_{0}^{1} - u^3 u_x  - u^2u_{xxx} +  u^2u_{xxxxx} \,dx \\
& = -\int_{0}^{1} (\frac{1}{4} u^4)_x \,dx + 2 \int_{0}^{1} (uu_x)u_{xx} \,dx -  2 \int_{0}^{1} (uu_x) u_{xxxx}\,dx\\
& =2 \int_{0}^{1} [ -u_t -  u_{xxx} + u_{xxxxx} ]u_{xx} \,dx \\
& \qquad \qquad - 2 \int_{0}^{1}[ - u_t - u_{xxx} + u_{xxxxx} ] u_{xxxx}\,dx \\
& =   2 \int_{0}^{1} u_{tx}u_x \,dx - 2 \int_{0}^{1} u_{xx}u_{xxx} \,dx + 2 \int_{0}^{1} u_{xx}u_{xxxxx} \,dx \\
 - 2 & \int_{0}^{1} u_{tx}u_{xxx} \,dx + 2  \int_{0}^{1} u_{xxx}u_{xxxx} \,dx - 2 \int_{0}^{1} u_{xxxx}u_{xxxxx} \,dx \\
&= 2 \int_{0}^{1} u_{tx}u_x \,dx + 2 \int_{0}^{1} u_{txx}u_{xx} \,dx.
\end{align*}
From this we can conclude that
\begin{align*}
\frac{d}{dt}\int_{0}^{1} [\frac{1}{3}u^3 -  u_x^2 - u_{xx}^2 ] \,dx = 0,
\end{align*}
and consequently \eqref{eq:cont2} follows from the above equation.
\end{proof}
\begin{lemma}
\label{lemma-5}
Let $ u(x,t)$ be a solution of the problem \eqref{eq:kawahara}. Then there exists a constant $\alpha$ such that
\begin{equation}
\label{eq:global1}
\begin{aligned}
\max | u_{x} (x, t)| \le \alpha
\end{aligned}
\end{equation}
\begin{equation}
\label{eq:global2}
\begin{aligned}
\norm {v}^2  \le e^{\gamma t} \norm { - f f^{'} -  f^{'''} + f^{'''''}}^2, \qquad \text {$ v = \frac{\partial u}{\partial t}$}. 
\end{aligned}
\end{equation} 
\end{lemma}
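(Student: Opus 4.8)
The plan is to derive \eqref{eq:global1} entirely from the two conservation laws of Lemma~\ref{lemma-4} combined with the Sobolev interpolation inequality \eqref{eq:est1}, and then to obtain \eqref{eq:global2} by an energy estimate for the time-differentiated equation followed by Gr\"onwall's inequality, using \eqref{eq:global1} to absorb the only dangerous term.

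First I would prove \eqref{eq:global1}. Set $E(t)=\norm{u_x}^2+\norm{u_{xx}}^2$. The conservation law \eqref{eq:cont2} rewrites this as $E=\frac{1}{3}\int_0^1 u^3\,dx-\alpha_2$, so the whole matter reduces to controlling the indefinite cubic term. I would estimate $\int_0^1 u^3\,dx\le\int_0^1\abs{u}^3\,dx\le(\max\abs{u})\int_0^1 u^2\,dx=\alpha_1\max\abs{u}$, using \eqref{eq:cont1}, and then apply \eqref{eq:est1} with $\tau=0,\sigma=1$ and $y=u$, namely $\max\abs{u}^2\le\eps\norm{u_x}^2+c(\eps)\alpha_1\le\eps E+c(\eps)\alpha_1$. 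Together with $\norm{u_x}^2\le E$ this yields an inequality of the form $E\le a\sqrt{E}+b$ with $a,b$ fixed constants. Since the right-hand side grows only like $\sqrt{E}$, this inequality forces a uniform bound on $E$, hence on both $\norm{u_x}$ and $\norm{u_{xx}}$. A second application of \eqref{eq:est1}, now with $\tau=1,\sigma=2$, then gives $\max\abs{u_x}^2\le\eps\norm{u_{xx}}^2+c(\eps)\norm{u}^2$, whose right-hand side is now bounded by the previous step and \eqref{eq:cont1}, establishing \eqref{eq:global1}.

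For \eqref{eq:global2} I would differentiate \eqref{eq:kawahara} in $t$ to get $v_t=-(uv)_x-v_{xxx}+v_{xxxxx}$, multiply by $v$, and integrate over $[0,1]$. The two dispersive contributions $\int_0^1 vv_{xxx}\,dx$ and $\int_0^1 vv_{xxxxx}\,dx$ vanish, since after integration by parts the periodic boundary terms cancel and an odd total number of derivatives remains. For the nonlinear part, one integration by parts in $\int_0^1 uvv_x\,dx=-\tfrac12\int_0^1 u_xv^2\,dx$ collapses the remaining terms to
\begin{equation*}
\frac{d}{dt}\norm{v}^2=-\int_0^1 u_x v^2\,dx\le\bigl(\max\abs{u_x}\bigr)\norm{v}^2\le\alpha\norm{v}^2,
\end{equation*}
where the last step is \eqref{eq:global1}. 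Gr\"onwall's inequality then gives $\norm{v(\cdot,t)}^2\le e^{\alpha t}\norm{v(\cdot,0)}^2$, and evaluating \eqref{eq:kawahara} at $t=0$ identifies $v(\cdot,0)=-ff'-f'''+f'''''$, which yields \eqref{eq:global2} with $\gamma=\alpha$.

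The substantive step is the a priori bound in the first part: the cubic conservation law \eqref{eq:cont2} is not coercive on its own, so the key observation is that interpolation controls the indefinite cubic term \emph{sublinearly} in the energy $E$, which is exactly what allows the self-referential inequality $E\le a\sqrt{E}+b$ to close. Once this is recognised, the remaining application of \eqref{eq:est1} to bound $\max\abs{u_x}$, and the entire energy-plus-Gr\"onwall argument for \eqref{eq:global2}, are routine.
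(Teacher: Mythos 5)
Your proposal is correct and follows essentially the same route as the paper: both parts rest on the conservation laws of Lemma~\ref{lemma-4} plus the interpolation inequality \eqref{eq:est1} to control the indefinite cubic term sublinearly in the energy (the paper closes a quadratic inequality in $\norm{u_{xx}}$ where you close $E\le a\sqrt{E}+b$, which is the same device), and the second part is the identical time-differentiation, energy estimate and Gr\"onwall argument.
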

\begin{proof} 
From \eqref{eq:cont1}, it follows that
\begin{align*}
 \norm {u_{xx}}^2 & \le \frac{1}{3} \int_{0}^{1} |u^3| \,dx  + \norm { u_x}^2 + |\alpha_2|\\
& \le \frac{1}{3} \left ( c(\epsilon) \norm u + \epsilon \norm {u_{xx}} \right ) \norm {u}^2 +  \left ( c(\epsilon) \norm {u}^2 + \epsilon \norm {u_{xx}}^2 \right )+ |\alpha_2|\\
& = \frac{1}{3} \left ( c(\epsilon) \sqrt {\alpha_1} + \epsilon \norm {u_{xx}} \right ) \alpha_1+ \left ( c(\epsilon) {\alpha_1} + \epsilon \norm {u_{xx}}^2 \right )+ |\alpha_2|.
\end{align*}
Now we can rewrite the above inequality in the following form 
\begin{equation}
\label{eq:bound}
\begin{aligned} 
a \norm{u_{xx}}^2 - b \norm{u_{xx}} - c & \le 0,
\end{aligned}
\end{equation}
for some constants $a,b,c$, where $ a = 1-\epsilon$, $b = \frac{1}{3} \alpha_1 \epsilon$ and $c = \frac{1}{3} c(\epsilon) \sqrt {\alpha_1} + c(\epsilon) \alpha_1 + |\alpha_2|$. Now it is easy to see that \eqref{eq:bound} gives,
$$
\left(  \sqrt a \norm {u_{xx}} - \frac{b}{2 \sqrt a} \right)^2 \le c + \frac{b^2}{4a}.
$$
From the above relation, it is clear that $\norm {u_{xx}} \le \alpha_3 $, for some constant $\alpha_3$. Again using the interpolation inequality, $\norm {u_x} \le \left ( c(\epsilon) \norm u + \epsilon \norm {u_{xx}} \right )$, we can conclude that $\norm {u_x} \le \alpha_4$, for some constant $\alpha_4$. Also we can use a similar type interpolation inequality to conclude that \eqref{eq:global1} holds.

Now the function $ v(x,t)= \frac{\partial u}{\partial t} $ satisfies,
\begin{equation*}
\begin{aligned}
\frac{dv}{dt}& =- vu_x - uv_x - v_{xxx} +  v_{xxxxx}
\end{aligned}
\end{equation*}
Multiplying the above equation by $v$ and integrating in space yields,
\begin{equation*}
\begin{aligned}
\frac{1}{2} \frac{d}{dt} \norm v^2 & = -(v, vu_x) - (v, uv_x) - (v, v_{xxx}) + (v, v_{xxxxx})\\
& \le (v^2 , u_x)
\end{aligned}
\end{equation*} 
Now \eqref{eq:global1} gives, 
\begin{align*}
\frac{d}{dt} \norm v^2 \le  \max |u_x| \norm v^2 \le C \norm v^2,
\end{align*} 
which implies
\begin{align*}
\norm v^2 & = \norm {\frac{\partial u(\cdot, t)}{\partial t}}^2 \le e^{\gamma t} \norm {\frac{\partial u(\cdot, 0)}{\partial t}}^2\\
& = e^{\gamma t} \norm { -ff' -  f^{'''} +  f^{'''''}}^2
\end{align*}
i.e. we established \eqref{eq:global2}.
\end{proof}

\noindent Now to get a bound on $\norm {u_{xxx}}$, we proceed as follows:
\begin{align*}
\frac{1}{2}\frac{d}{dt}\int_{0}^{1} u_{xxx}^2 \,dx & =  \int_{0}^{1} u_{xxx} u_{xxxt}\,dx\\
& = - \int_{0}^{1} u_{xxxx} u_{xxt}\,dx =  \int_{0}^{1}  u_{xxxxx} u_{xt}\,dx \\
& =  \int_{0}^{1} u_{xt} ( u_t + uu_x + u_{xxx})\,dx\\
& = \int_{0}^{1} v v_x \,dx +  \int_{0}^{1} u u_x u_{xt} \,dx - \frac{1}{2} \frac{d}{dt}  \int_{0}^{1} u_{xx}^2 \,dx\\
& = \int_{0}^{1} (\frac{1}{2}v^2)_x \,dx +  \int_{0}^{1} u u_x u_{xt} \,dx - \frac{1}{2} \frac{d}{dt}  \int_{0}^{1} u_{xx}^2 \,dx\\
& = \int_{0}^{1} v u_{x}^2 \,dx + \int_{0}^{1} v u u_{xx} \,dx + \beta \frac{1}{2} \frac{d}{dt}  \int_{0}^{1} u_{xx}^2 \,dx\\
& \le (\max |u_x|)( \norm{u_x} \norm {v}) + (\max |u|)( \norm{u_{xx}} \norm {v}) \\
& \qquad \qquad + \frac{1}{2} \frac{d}{dt}  \int_{0}^{1}u_{xx}^2 \,dx
\end{align*}
Since the use of the interpolation inequality allows us to conclude that all the terms $\max|u|, \max|u_x|, \norm {u_x}, \norm{u_{xx}}$ are bounded by some constant, hence from the above relation we end up with
\begin{align*}
\frac{d}{dt} \int_{0}^{1} \left( \frac{1}{2} u_{xxx}^2 - \frac{1}{2} u_{xx}^2\right)\,dx \le \alpha \norm{v}. 
\end{align*}
Therefore we conclude that,
\begin{equation}
\label{eq:third}
\begin{aligned}
\norm {u_{xxx}}^2 \le C_1 e^{\gamma t} ( \norm { - ff' -  f^{'''} +  f^{'''''}}).
\end{aligned}
\end{equation}
Again we can now use the Kawahara equation \eqref{eq:kawahara} and triangular inequality to conclude that 
From the above relation we conclude that,
\begin{equation}
\label{eq:fifth}
\begin{aligned}
\norm {u_{xxxxx}} \le C_1 e^{\gamma t} ( \norm { - ff' -  f^{'''} +  f^{'''''}}) + C_2,
\end{aligned}
\end{equation}
where $C_1$ and $C_2$ are constants. One can see that the bound \eqref{eq:global2} guarantees that $\frac {\partial u}{\partial t}$ is square integrable for every $t$ and 
\eqref{eq:third}, \eqref{eq:fifth} that the problem \eqref{eq:kawahara} with the initial function $u(x,T_1)$ instead of $f(x)$ has a solution for $T_1 \le  t \le T_2 = 2T_1$. Consequently, we get a solution of the Kawahara equation for $0 \le t \le T$. Now to obtain the existence of sloutions for all $t>0$, we will repeat the extension procedure. For this purpose suppose that existence can be proven only in $0 <T < \infty$. Now if we look at the expression for $t_{\infty}$, we find that only $y_0$ depends on $t$. But $y_0$ can, because of \eqref{eq:global2}, be chosen to hold in the whole interval $0\le t \le T$. Consequently, if we consider problem \eqref{eq:kawahara} with $f(x)= u(x,\tau)$ for some $\tau$ sufficiently close to $T$, we can, by using the local procedure, get existence for values of $t$ lying outside $0 \le t \le T$.

\begin{remark}
To prove the existence in the lower half-plane, we proceed as follows:
Consider the following equation
\begin{equation}
\label{eq:lowerhalf}
\begin{aligned}
- u_t & = u u_x + u_{xxx} - u_{xxxxx},
\end{aligned}
\end{equation}
which arises when making the transformations $ x \rightarrow (1-x)$ and $ t \rightarrow -t$. Now if we substitute \eqref{eq:lowerhalf} for \eqref{eq:kawahara}, then we can prove existence of solution in the lower half plane. This is possible since we get an interval $T_1 \le t \le 0$ in Theorem ~\ref{theo:localexistence}. Consequently, this concludes the proof of the existence part of Theorem ~\ref{theo:maintheorem}. 
\end{remark}

\section{ Fully Discrete semi-Implicit Scheme:}
We know that in order to have a completely satisfactory numerical method, one must seek a fully discrete scheme that reduces the actual computation to a solution of a finite set of algebric equations. So, here we will consider a semi-implicit fully discrete scheme and show the convergence of the solution of that fully discrete scheme to the solution of \eqref{eq:kawahara}. To do that, let us select a time step $\Dt>0$, and write $t^n = n\Dt$. The value of our differnce approximation at $(j\Dx, n\Dt)$ will be denoted by $u_j^n$ in the fully discrete case. To simplify the notation, we introduce the finite difference operator:
\begin{align*}
\Dpt u_j^n = \frac{1}{\Dt}(u_j^{n+1} - u_{j}^n).
\end{align*}
We propose the following semi-implicit fully-discrete aproximation to \eqref{eq:kawahara}, given by
\begin{equation}
\begin{aligned}
\label{eq:fullydiscrete1}
u_j^{n+1} & =  \bar{u}_j^{n} - \frac{\Dt}{3}[ \bar{u}_j^{n} \Dcx u_j^{n}  + \Dcx (u_j^{n})^2] - \Dt \Dmx \Dpx^{2} u_j^{n+1} +\Dt \Dpx^3 \Dmx^2 u_j^{n+1},\\
&\qquad \qquad \text{where} \quad \bar{u}_j = \frac{1}{2}( u_{j+1} + u_{j-1}).
\end{aligned}
\end{equation}
Now keeping in mind that $ \Dcx u_j^2 = 2 \bar{u}_j \Dcx u_j$, we can rewrite the above scheme as
\begin{equation}
\begin{aligned}
\label{eq:fullydiscrete}
u_j^{n+1} & =  \bar{u}_j^n - \Dt \bar{u}_j^{n} \Dcx u_j^{n} - \Dt \Dmx \Dpx^{2} u_j^{n+1} + \Dt \Dpx^3 \Dmx^2 u_j^{n+1}.
\end{aligned}
\end{equation}
with the initial condition 
\begin{equation}
\begin{aligned}
\label{eq:fullydiscrete_initial}
u_i^{0}= f(x_i), \quad \text{$ i= 1,2,...,N$},
\end{aligned}
\end{equation}
and the boundary condition
\begin{equation}
\begin{aligned}
\label{eq:fullydiscrete_boundary}
u_i^{n}= u_{i+N}^{n} \quad \text{ for all $i$ and $n$}
\end{aligned}.
\end{equation}


\subsection{ Local existence of solution:}

To show the local existence of solution to \eqref{eq:kawahara}, we will use similar arguments to the ones used in the semi discrete case. First we will state the main theorem, similar to Theorem ~\ref{theo:localexistence}.
\begin{theorem}
\label{theo:fulllocalexistence}
There exists a time $T>0$ and constants $k_i$, $i= 0,1,2,3,4$ independent of $\Dx$ but dependent on $f(x)$ and its derivatives of order five and lower, such that
\begin{equation}
\label{eq:fullexis1}
\begin{aligned}
\norm {u^n}_{h} \le k_0,\qquad \qquad \text{ $0\le n\Dt \le T$} 
\end{aligned}
\end{equation}
\begin{equation}
\label{eq:fullexis2}
\begin{aligned}
| u^n(x_i)| \le k_1, \qquad \qquad \text{ $0\le n\Dt \le T$}
\end{aligned}
\end{equation} 
\begin{equation}
\label{eq:fullexis3}
\begin{aligned}
\norm {\Dmx \Dpx^{2} u^n}_{h} \le k_2,  \quad \text{ $0\le n\Dt \le T$}
\end{aligned}
\end{equation} 
\begin{equation}
\label{eq:fullexis4}
\begin{aligned}
\norm {\Dpx^{3} \Dmx^{2} u^n}_{h}  \le k_3, \quad \text{ $0\le n\Dt \le T$}
\end{aligned}
\end{equation} 
and finally with,
\begin{equation*}
\begin{aligned}
v^n = \Dpt u^{n-1}, \quad \text{$ n \in \N_0$} 
\end{aligned}
\end{equation*}
\begin{equation}
\label{eq:fullexis5}
\begin{aligned}
\norm {v^n}_{h}  \le k_4, \quad \text{ $0\le n\Dt \le T$}.
\end{aligned}
\end{equation} 
\end{theorem}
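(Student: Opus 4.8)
The plan is to follow the template of Theorem~\ref{theo:localexistence}, replacing the continuous-in-time energy identities by their discrete-in-time counterparts and the differential-inequality/ODE-comparison step by a discrete (nonlinear) Gronwall argument. Because the explicit treatment of the convective term destroys the exact cancellation available in the semidiscrete setting, the five bounds \eqref{eq:fullexis1}--\eqref{eq:fullexis5} are now coupled, so I would establish them simultaneously by induction on $n$ and fix $T$ small only at the end.

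First I would settle the solvability of a single time step. Writing \eqref{eq:fullydiscrete} as $(I + \Dt\,\mathcal{L})u^{n+1} = \bar{u}^n - \Dt\,\bar{u}^n\Dcx u^n$ with $\mathcal{L} = \Dmx\Dpx^2 - \Dpx^3\Dmx^2$, the two summation-by-parts identities established in the proof of Theorem~\ref{theo:localexistence} (see \eqref{eq:ineq} and its fifth-order analogue) give $(z,\mathcal{L}z)_h = \tfrac{\Dx}{2}\norm{\Dpx\Dmx z}_h^2 + \tfrac{\Dx}{2}\norm{\Dmx\Dpx^2 z}_h^2 \ge 0$ for every gridfunction $z$. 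Hence $I + \Dt\,\mathcal{L}$ is positive definite, so it is invertible and $u^{n+1}$ is uniquely determined; this makes the whole iteration well defined.

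Next comes the $L^2$ estimate \eqref{eq:fullexis1}. Testing the rearranged scheme against $u^{n+1}$ and discarding the nonnegative term $\Dt(u^{n+1},\mathcal{L}u^{n+1})_h$ yields $\norm{u^{n+1}}_h^2 \le (u^{n+1},\bar{u}^n)_h - \Dt(u^{n+1},\bar{u}^n\Dcx u^n)_h$. Two structural facts drive the estimate: the averaging operator is a contraction, $\norm{\bar{u}^n}_h \le \norm{u^n}_h$ (its Fourier multiplier is $\cos(2\pi\omega\Dx)$), so that $(u^{n+1},\bar{u}^n)_h \le \tfrac12\norm{u^{n+1}}_h^2 + \tfrac12\norm{u^n}_h^2$; and the explicit nonlinear term is bounded by $\Dt\,\norm{u^{n+1}}_h\,\max_i\abs{\bar{u}_i^n}\,\norm{\Dcx u^n}_h$. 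Unlike the semidiscrete case, where the nonlinear contribution cancelled identically, here it survives and must be absorbed using $\max_i\abs{u_i^n}$ and $\norm{\Dcx u^n}_h$, both controlled by the inductive bounds \eqref{eq:fullexis2} and---via Remark~\ref{remark-2} and Lemma~\ref{lemma:lemma-3}---by \eqref{eq:fullexis4}. This produces an inequality of the form $\norm{u^{n+1}}_h^2 \le (1 + C\Dt)\norm{u^n}_h^2 + C\Dt$, and discrete Gronwall gives \eqref{eq:fullexis1} on $0 \le n\Dt \le T$.

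For the remaining estimates I would mirror the semidiscrete chain. The bound \eqref{eq:fullexis4} on $\norm{\Dpx^3\Dmx^2 u^{n+1}}_h$ in terms of $\norm{v^{n+1}}_h$ follows exactly as in \eqref{eq:fifthorder}, by solving \eqref{eq:fullydiscrete} for the fifth-order term, applying the triangle inequality and the interpolation Lemma~\ref{lemma:lemma-3}, and absorbing the small multiple of $\norm{\Dpx^3\Dmx^2 u^{n+1}}_h$ on the left. To close the loop I would apply $\Dpt$ to the scheme, obtaining the evolution equation for $v^n = \Dpt u^{n-1}$, test it against $v^{n+1}$, and reuse the cancellations above together with \eqref{eq:fullexis4}; this yields a recursion of the form $\norm{v^{n+1}}_h^2 \le \norm{v^n}_h^2 + \Dt\,(\nu_3\norm{v^n}_h^3 + \nu_4\norm{v^n}_h^2)$, the discrete counterpart of \eqref{eq:discrete2}. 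Comparing this recursion with the solution of \eqref{eq:initialvalue} gives a time $T$ on which $\norm{v^n}_h \le k_4$, i.e. \eqref{eq:fullexis5}, and then \eqref{eq:fullexis2} and \eqref{eq:fullexis3} drop out of Lemma~\ref{lemma:lemma-3} and Remark~\ref{remark-2} as before. The main obstacle is precisely this explicit nonlinear term: since it no longer cancels, every estimate feeds back into the others, so the argument must be run as a single induction on $n$; one must verify that all constants stay uniform in $\Dx$ and that the superlinear discrete Gronwall recursion does not blow up before $T$---which, exactly as in the continuous comparison, forces $T$ to be only local.
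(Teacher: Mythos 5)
Your outline has the right overall shape (an energy estimate for $u^n$, a bound on $\norm{\Dpx^3\Dmx^2 u^{n+1}}_h$ in terms of $\norm{v^{n+1}}_h$, and a superlinear discrete Gronwall recursion for $\norm{v^n}_h^2$ compared against a blow-up ODE), but it misses the two ingredients that actually make the paper's proof, via Lemma~\ref{lemma:fullydiscrete}, close. First, nowhere do you impose a CFL condition, whereas the paper's estimates \eqref{eq:fullest1}--\eqref{eq:fullest2} hold only under $\frac{\Dt}{\Dx^{3/2}}\norm{u^n}\bigl(1+24\frac{\Dt}{\Dx^{3/2}}\norm{u^n}\bigr)\le\frac32$ (see \eqref{eq:cfl}, \eqref{eq:cflsemi}, \eqref{eq:fullycfl}), i.e.\ essentially $\Dt=O(\Dx^{3/2})$. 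Second, and this is where your argument would actually fail: the equation for $v^n=\Dpt u^{n-1}$ contains the convective term $\Dt\,\bar u^n\Dcx v^n$. In the semidiscrete proof the analogous term $(v,u\Dcx v)_h$ is tamed by summation by parts, moving the difference operator off $v$ and onto $u$; here you must test against $v^{n+1}$ while the difference falls on $v^n$, the two live at different time levels, and no such integration by parts is available. ``Reusing the cancellations above'' therefore leaves you with a contribution of size $\Dt\,\norm{v^{n+1}}_h\max\abs{\bar u^n}\,\norm{\Dcx v^n}_h$, and you have no a priori bound on $\norm{\Dcx v^n}_h$ better than $O(\Dx^{-1}\norm{v^n}_h)$, so the recursion $\norm{v^{n+1}}_h^2\le\norm{v^n}_h^2+\Dt(\nu_3\norm{v^n}_h^3+\nu_4\norm{v^n}_h^2)$ you assert does not follow. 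The paper gets around this by exploiting the numerical dissipation hidden in the averaging $\bar u$, via the exact identities $\bar u\,\Dcx u^2=\frac13\Dcx u^3-\frac{\Dx^2}{12}(\Dcx u)^3$ and $\bar u^2-\overline{u^2}=-\frac{\Dx^2}{4}(\Dcx u)^2$: these produce good terms $-\frac{\Dx^2}{8}\norm{\Dcx u^n}_h^2$ and $-\frac{\Dx^2}{8}\norm{\Dcx v^n}_h^2$ on the left of \eqref{eq:fullest1} and \eqref{eq:fullest2}, which, precisely under the CFL condition, absorb all the dangerous quadratic and cubic terms in $\Dcx u^n$ and $\Dcx v^n$.

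A secondary but real issue is the logical structure. Your $L^2$ estimate borrows the inductive bounds $\max_i\abs{u_i^n}\le k_1$ and $\norm{\Dpx^3\Dmx^2 u^n}_h\le k_3$, which themselves depend on $k_4$, whose Gronwall constants depend back on $k_0$; you acknowledge this circularity but never close it. In the paper the circle never arises: \eqref{eq:fullest1} gives the unconditional monotone decay $\norm{u^{n+1}}_h\le\norm{u^n}_h\le\norm f_h$ for every $n$ (the CFL condition involves only $\norm{u^n}_h$, which the estimate itself propagates), so \eqref{eq:fullexis1} is established first and independently of the other four bounds, and only afterwards are the $v$-recursion and the interpolation inequalities of Lemma~\ref{lemma:lemma-3} brought in. If you want a proof along your lines you must either reproduce these algebraic cancellations or add explicit artificial dissipation to the scheme; as written, the estimate \eqref{eq:fullexis5} for $\norm{v^{n+1}}_h$ is the step that does not go through.
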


\noindent Before giving a proof of the above theorem, we will first prove the following usefull Lemma:
\begin{lemma}
\label{lemma:fullydiscrete}
Let $ u_j^n$ be a solution of the difference scheme \eqref{eq:fullydiscrete} and define $v^n = \Dpt u^{n-1}$. Then the following two estimates hold
\begin{equation}
\label{eq:fullest1}
\begin{aligned}
\norm {u^{n+1}}^2 + \Dt \Dx \left ( \norm { \Dmx \Dpx^2 u^{n+1}}^2  + \norm { \Dpx {\Dmx} u^{n+1}}^2 + \frac{1}{8 \lambda} \norm {\Dcx u^n}^2 \right) & \le \norm {u^{n}}^2 ,
\end{aligned}
\end{equation}
also we have, 
\begin{equation}
\label{eq:fullest2}
\begin{aligned}
& \norm {v^{n+1}}^2 +  \Dt \Dx \left(  \norm{ \Dpx \Dmx {v^{n+1}}}^2  + \norm{ \Dmx \Dpx^2 {v^{n+1}}}^2 + \frac{1}{8 \lambda} \norm {Dv^n}^2 \right) \\
& \qquad \qquad \qquad \le \norm {v^n}^2  + C\Dt \max |Du^n| \norm{v^n}^2,
\end{aligned}
\end{equation}
provided the CFL condition
\begin{equation}
\label{eq:cfl}
\begin{aligned}
\qquad \qquad \frac{\Dt}{\Dx^{3/2}} \norm {u^n} ( 1 + 24 \frac{\Dt}{\Dx^{3/2}} \norm {u^n}) & \le \frac{3}{2},
\end{aligned}
\end{equation}
\end{lemma}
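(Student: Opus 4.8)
The plan is to carry the energy argument behind \eqref{eq:exis1} over to the fully discrete setting, using the forward difference $\Dpt$ in place of $\partial_t$ and exploiting the semi-implicit structure, so that the dispersive operators, being evaluated at level $n+1$, generate genuine dissipation while the CFL condition \eqref{eq:cfl} tames the explicitly discretized nonlinearity. Since the lemma takes $u^n_j$ to be a solution of \eqref{eq:fullydiscrete}, solvability of the implicit step is assumed and I only need the a priori bounds. For \eqref{eq:fullest1} I would take the discrete scalar product of \eqref{eq:fullydiscrete}, rewritten as $u^{n+1}-\bar u^n=-\Dt\,\bar u^n\Dcx u^n-\Dt\Dmx\Dpx^2u^{n+1}+\Dt\Dpx^3\Dmx^2u^{n+1}$, with $2u^{n+1}$. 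The time increment is handled by the polarization identity $2(a-b,a)_h=\norm a^2-\norm b^2+\norm{a-b}^2$, and the two dispersive terms by the summation-by-parts identities established in and just after \eqref{eq:ineq}, applied to $w=u^{n+1}$; the factor $2$ in the multiplier converts their $\Dx/2$ prefactors into the full $\Dt\Dx$ coefficients of \eqref{eq:fullest1}. This yields the exact identity
\[ \norm{u^{n+1}}^2+\norm{u^{n+1}-\bar u^n}^2+\Dt\Dx\norm{\Dpx\Dmx u^{n+1}}^2+\Dt\Dx\norm{\Dmx\Dpx^2 u^{n+1}}^2=\norm{\bar u^n}^2-2\Dt\,(\bar u^n\Dcx u^n,u^{n+1})_h. \]
Writing $\bar u_j=u_j+\tfrac{\Dx^2}{2}\Dpx\Dmx u_j$ and summing by parts gives $\norm{\bar u^n}^2=\norm{u^n}^2-\Dx^2\norm{\Dmx u^n}^2+\tfrac{\Dx^4}{4}\norm{\Dpx\Dmx u^n}^2\le\norm{u^n}^2$, so the averaging operator is a contraction, and the nonnegative deficit $\norm{u^n}^2-\norm{\bar u^n}^2$ is kept in reserve to balance the nonlinear term.

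The crux is the one term of indefinite sign, the explicit nonlinear contribution $N=-2\Dt(\bar u^n\Dcx u^n,u^{n+1})_h$. Here I would lean on the skew-symmetric origin of the discretization: in the semi-discrete proof the analogous term $\tfrac13[u\Dcx u+\Dcx u^2]$ tested against the solution vanished identically, so the obstruction to cancellation now comes only from the frozen level of $\Dcx u^n$ and from the test state being $u^{n+1}$ rather than $u^n$. I would make this quantitative by substituting $u^{n+1}=\bar u^n+(u^{n+1}-\bar u^n)$. The increment piece $-2\Dt(\bar u^n\Dcx u^n,u^{n+1}-\bar u^n)_h$ is split by Young's inequality into $\norm{u^{n+1}-\bar u^n}^2$, absorbed by the matching term on the left, plus $\Dt^2\norm{\bar u^n\Dcx u^n}^2\le\Dt^2\Dx^{-1}\norm{u^n}^2\norm{\Dcx u^n}^2$, using the inverse inequality $\max_j\abs{\bar u^n_j}\le\max_j\abs{u^n_j}\le\Dx^{-1/2}\norm{u^n}$. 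The diagonal piece $((\bar u^n)^2,\Dcx u^n)_h$, a consistent discretization of $\int(\tfrac13u^3)_x=0$, is estimated after summation by parts (using $\bar u\Dcx u=\tfrac12\Dcx u^2$) by $C\Dx^{-1/2}\norm{u^n}^2\norm{\Dcx u^n}$. Setting $\mu=\Dt\Dx^{-3/2}\norm{u^n}$, these two bounds produce a coefficient of $\Dx\norm{\Dcx u^n}^2$ of the shape (linear in $\mu$) plus (quadratic in $\mu$), matching the product structure $\mu(1+24\mu)$ in \eqref{eq:cfl}; the CFL condition is then precisely the hypothesis under which, after transferring $N$ to the left and using the deficit of the first step, the residual $+\tfrac{\Dt\Dx}{8\lambda}\norm{\Dcx u^n}^2$ survives with the correct sign. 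This is \eqref{eq:fullest1}.

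For \eqref{eq:fullest2} I would first derive the evolution equation satisfied by $v^{n+1}=\Dpt u^n$ by differencing \eqref{eq:fullydiscrete} between levels $n$ and $n-1$ and dividing by $\Dt$. The discrete product rule $\bar u^n\Dcx u^n-\bar u^{n-1}\Dcx u^{n-1}=\Dt(\bar v^n\Dcx u^n+\bar u^{n-1}\Dcx v^n)$ shows that $v^{n+1}$ obeys the same scheme as $u^{n+1}$, with nonlinear term $\bar v^n\Dcx u^n+\bar u^{n-1}\Dcx v^n$. Repeating the multiplication by $2v^{n+1}$ and the same summation-by-parts and averaging steps reproduces the dissipative and $\tfrac{1}{8\lambda}\norm{Dv^n}^2$ terms verbatim; the only new feature is the contribution $-2\Dt(\bar v^n\Dcx u^n,v^{n+1})_h$, whose coefficient $\Dcx u^n$ is frozen and cannot be converted into dissipation, so Cauchy--Schwarz leaves exactly the Gronwall remainder $C\Dt\max_j\abs{Du^n}\norm{v^n}^2$ on the right of \eqref{eq:fullest2}. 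This is the discrete counterpart of $\tfrac{d}{dt}\norm v^2\le\max\abs{u_x}\norm v^2$ used in Lemma~\ref{lemma-5}.

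I expect the main obstacle to be precisely the nonlinear bookkeeping of the third step: to keep \eqref{eq:fullest1} free of any spurious additive or growth factor and with the stated clean constant $\tfrac{1}{8\lambda}$, the explicit term $N$ must be estimated just sharply enough that the near-cancellation coming from the skew-symmetric split, the inverse inequality, and the averaging deficit combine so that the coefficient of $\Dx\norm{\Dcx u^n}^2$ is governed by the exact quadratic in $\mu$ appearing in \eqref{eq:cfl}. Any cruder estimate would either force a more restrictive CFL restriction or spoil the conservative form $\le\norm{u^n}^2$ of the inequality, so the delicate point is the matching of these constants rather than any single routine calculation.
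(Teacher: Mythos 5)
Your overall architecture is the right one and matches the paper's in spirit: test the scheme against the new time level, use the polarization identity for the time increment, use the summation-by-parts identities from \eqref{eq:ineq} so that the implicitly treated dispersive operators produce the two dissipative terms with coefficient $\Dt\Dx$, and recover the $\frac{1}{8\lambda}\norm{\Dcx u^n}^2$ term from the deficit of the averaging operator. The paper organizes this differently — it first analyzes the purely explicit half-step $w=\bar u^n-\Dt\,\bar u^n\Dcx u^n$ by exact pointwise algebra and only then feeds $w$ into the implicit dispersive step — but your one-shot inner-product version of the dispersive and averaging bookkeeping is equivalent and correct as far as it goes.

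The gap is in your treatment of the diagonal piece of the nonlinearity. You bound $((\bar u^n)^2,\Dcx u^n)_h$ after summation by parts by $C\Dx^{-1/2}\norm{u^n}^2\norm{\Dcx u^n}$. This quantity is \emph{linear} in $\norm{\Dcx u^n}$, so it cannot be matched against the only reserve you have, the averaging deficit $\norm{u^n}^2-\norm{\bar u^n}^2\sim\Dx^2\norm{\Dcx u^n}^2$, which is quadratic. A Young step to make it quadratic leaves behind $\Dt^2\Dx^{-3}\norm{u^n}^4$, which under the CFL scaling $\Dt\sim\Dx^{3/2}$ is $O(1)$ \emph{per time step}; iterated over $n\sim T/\Dt$ steps this diverges, and in any case it destroys the conservative form $\norm{u^{n+1}}^2\le\norm{u^n}^2$ that \eqref{eq:fullest1} asserts. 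The cancellation you allude to ("a consistent discretization of $\int(\tfrac13 u^3)_x=0$") must actually be used quantitatively, and this is precisely what the paper does: from $\tfrac14(a+b)(a^2-b^2)=\tfrac13(a^3-b^3)-\tfrac1{12}(a-b)^3$ one gets $\bar u\,\Dcx u^2=c_1\Dcx u^3+c_2\Dx^2(\Dcx u)^3$, the first term telescopes to zero over the period, and the surviving remainder is $O(\Dt\Dx^2)\sum_j\Dx(\Dcx u_j)^3$ — genuinely \emph{cubic} in $\Dcx u$, hence bounded by $C\mu\,\Dx^2\norm{\Dcx u^n}^2$ after one inverse inequality $\max_j|\Dcx u^n_j|\le\Dx^{-3/2}\norm{u^n}$, and absorbable by the deficit under \eqref{eq:cfl}. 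The same issue recurs in \eqref{eq:fullest2}: your claim that "the only new feature" is $-2\Dt(\bar v^n\Dcx u^n,v^{n+1})_h$ understates the work — the paper needs the analogous exact identities $\bar v^2\Dcx v=\tfrac13\Dcx v^3-\tfrac{\Dx^2}{12}(\Dcx v)^3$ and $\bar u\bar v\Dcx v+\bar v^2\Dcx u=\tfrac12\Dcx(uv^2)+\dots$, plus the auxiliary condition \eqref{eq:fullycfl}, to keep the $\tfrac{1}{8\lambda}\norm{\Dcx v^n}^2$ term with the correct sign. So the missing ingredient is concrete: replace the Cauchy--Schwarz bound on the diagonal piece by the telescoping identity; without it the argument does not close.
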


\begin{proof}
First of all, just to avoid clumsy notations, we will drop the index $j$ from our notation, and use the notation $u, Du$ for $u_j , \Dcx u_j$ respectively where $j$ is fixed. We first study the ``Burgers'' term $\Dt \bar{u} Du$. Let $u$ be a gridfunction and set
\begin{equation}
\label{eq:dis1}
\begin{aligned}
w = \overline{u} - \Dt \bar{u} Du.
\end{aligned}
\end{equation}
Set $\lambda = \Dt/\Dx$, and we will use the following CFL condition:
\begin{equation}
\label{eq:cflsemi}
\begin{aligned}
\frac{\Dt}{\Dx}\max |u| ( 1 + 24 \frac{\Dt}{\Dx} \max |u|) \le \frac {\Dt}{\Dx^{3/2}} \norm {u} ( 1 + 24 \frac{\Dt}{\Dx^{3/2}} \norm {u}) \le \frac{3}{2}
\end{aligned}
\end{equation}
Multiplying \eqref{eq:dis1} by $\bar{u}$, we have
\begin{align*}
\frac{1}{2} w^2 &= \frac{1}{2} \bar{u}^2 - \Dt\frac{1}{2} \bar{u} D{u}^2 + \frac{1}{2} ( w -\bar{u})^2 \\
& = \frac{1}{2} \bar{u^2} - \Dt\frac{1}{2} \bar{u} D{u}^2 + \Dt^2\bar{u}^2 (Du)^2 +  \frac{1}{2} ( \bar{u}^2 -\bar{u^2}). 
\end{align*}
We will use the following relations
\begin{align*}
\frac{1}{4} (a+b)(a^2 -b^2) = \frac{1}{3}(a^3 -b^3) -\frac{1}{12} (a-b)^3,
\end{align*}
and
\begin{align*}
\frac{1}{4}(a+b)^2 -\frac{1}{2}(a^2 +b^2) = -\frac{1}{4}(a-b)^2. 
\end{align*}
For a gridfunction, these implies
\begin{align*}
\bar{u} D{u}^2 & = \frac{1}{3} D{u}^3 - \frac{\Dx^2}{12} (Du)^3, \\
\bar{u}^2 -\bar{u^2} & = -\frac{\Dx^2}{4} (Du)^2.
\end{align*}
Therefore,\\
\begin{equation}
\label{eq:discrete1}
\begin{aligned}
\frac{1}{2} w^2 & = \frac{1}{2} \bar{u^2} - \frac{\Dt}{6} D{u}^3 + \Dt^2\bar{u}^2 (Du)^2 +  \frac{\Dt \Dx^2}{24}
(Du)^3 - \frac{\Dx^2}{8} (Du)^2\\
& = \frac{1}{2} \bar{u^2} - \frac{\Dt}{6} D{u}^3 - \frac{\Dx^2}{16} (Du)^2\\
& \qquad \qquad +  \frac{\Dx^2}{24}(Du)^2 \left ( \lambda \Dx Du + 24 \lambda^2 \bar{u}^2 - \frac{3}{2} \right )
\end{aligned}
\end{equation}

Now summing \eqref{eq:discrete1} over $j$ and using the CFL condition \eqref{eq:cflsemi} we conclude,
\begin{equation}
\label{eq:dis2}
\begin{aligned}
\norm w^2 + \frac{\Dx^2}{8} \norm {Du}^2 \le \norm u^2.
\end{aligned}
\end{equation}
 
Now we are in a postion to study the full difference scheme by adding the ``Airy term'' $\Dt \Dmx \Dpx^{2} u_j^{n+1}$, and the ``Kawahara term'' $\Dt \Dpx^3 \Dmx^2 u_j^{n+1} $. 
Thus the full difference scheme \eqref{eq:fullydiscrete} can be written as
\begin{equation}
\label{eq:dis3}
\begin{aligned}
v = w - \Dt \Dmx \Dpx^{2} v + \Dt \Dpx^3 {\Dmx}^2 v .
\end{aligned}
\end{equation}
Multiplying the above equation \eqref{eq:dis3} by $v$, and summing over $j$, we have
\begin{align*}
\frac{1}{2} \norm {v}^2 + \sum_{j} \frac{\Dx}{2} (v_j-w_j)^2 & = \frac{1}{2} \norm {w}^2 - \Dt(v , \Dmx \Dpx^{2} v )+ \Dt (v, \Dpx^3 \Dmx^2 v )\\
& =  \frac{1}{2} \norm{w}^2  - \frac{ \Dx \Dt}{2} \norm{ \Dpx \Dmx v}^2 - \frac{ \Dx \Dt}{2} \norm{ \Dmx \Dpx^2 v}^2
\end{align*}
Hence finally we have,
\begin{equation}
\label{eq:dis4}
\begin{aligned}
\norm {v}^2 + \Dx \Dt \norm{ \Dpx \Dmx v}^2 + \Dx \Dt \norm{ \Dmx \Dpx^2 v}^2 \le \norm {w}^2 
\end{aligned}
\end{equation}

Now combining two results coming from \eqref{eq:dis2} and \eqref{eq:dis4}, we conclude that
\begin{equation}
\label{eq:dis5}
\begin{aligned}
\norm {u^{n+1}}^2 + \Dx \Dt \norm{ \Dpx \Dmx {u^{n+1}}}^2 + \Dx \Dt \norm{ \Dpx \Dmx^2 {u^{n+1}}}^2 + \frac{\Dx^2}{8} \norm {Du^n}^2  \le \norm {u^n}^2, 
\end{aligned}
\end{equation}
which is exactly \eqref{eq:fullest1}. 
To prove \eqref{eq:fullest2}, we will proceed as follows: \\
First we define
\begin{align*}
v^n = \Dpt u^{n-1}, \quad \text{$ n \in \N_0$}
\end{align*}

Now the gridfunction satisfies 
\begin{equation}
\label{eq:dis6}
\begin{aligned}
v^{n+1} = \bar{v}^n - \Dt (\bar{v}^n Du^n + \bar{u}^n Dv^n ) +\Dt^2 \bar{v}^n Dv^n - \Dt \Dmx \Dpx^{2} v^{n+1} + \Dt \Dpx^3 \Dmx^2 v^{n+1}.
\end{aligned}
\end{equation}
Set
\begin{align*}
w = \bar{v} - \Dt D(uv) + \frac{\Dt^2}{2} Dv^2
\end{align*}
We proceed as before and multiply this by $\overline{v}$ to find
\begin{align*}
\frac{1}{2} w^2 & = \frac{1}{2} \bar{v^2} + \frac{\Dt^2}{2}\left( D(uv) - \frac{\Dt}{2} Dv^2 \right)^2 \\
& \qquad \qquad - \Dt (\bar{u}\bar{v} Dv + \bar{v}^2 Du ) + \Dt^2 \bar{v}^2 Dv - \frac{\Dx^2}{8} (Dv)^2
\end{align*}
Now we will use the following relations
\begin{align*}
\frac{1}{2}\left( D(uv) - \frac{\Dt}{2} Dv^2 \right)^2 & \le (D(uv))^2 + \Dt^2 \bar{v}^2 (Dv)^2\\
& \le 2 \bar{u}^2 (Dv)^2 + 2 \bar{v}^2 (Du)^2 + \Dt^2 \bar{v}^2 (Dv)^2,\\
\bar{v}^2 Dv & = \frac{1}{3} Dv^3 - \frac{\Dx^2}{12} (Dv)^3,\\
\bar{u}\bar{v} Dv + \bar{v}^2 Du & = \frac{1}{2}D(uv^2) + \frac{1}{2} \bar{v}^2 Du - \frac{\Dx^2}{8}(Dv)^2 Du.
\end{align*}
Using this
\begin{equation}
\label{eq:fuldiscrete2}
\begin{aligned}
\frac{1}{2} w^2 & = \frac{1}{2} \bar{v^2} - \Dt D \left (\frac{1}{2} u v^2 -\frac{\Dt}{3} v^3 \right) - \frac{\Dt}{2} \bar{v}^2 Du - \frac{\Dt \Dx^2}{8} (Dv)^2 Du \\
& \qquad + \Dt^2 \left( 2 \bar{u}^2 (Dv)^2 + 2 \bar{v}^2 (Du)^2 + \Dt^2 \bar{v}^2 (Dv)^2 - \frac{\Dx^2}{12}(Dv)^3\right)\\
& \qquad \qquad - \frac{\Dx^2}{8} (Dv)^2.
\end{aligned}
\end{equation}
Now our aim should be to balance all the positive terms with $ \Dx^2 (Dv)^2$. For that we will proceed as follows:
\begin{align*}
\Dx^2 (Dv)^2 &\le 4 \bar{v^2},\\
\Dt^2 \bar{v}^2 (Du)^2 &\le C \Dt \bar{v^2}|Du|,\\
\Dt^2 \bar{v}^2 & \le 2 \left( \max_{j} |u^n|^2 + \max_{j} |u^{n-1}|^2 \right),\\
\Dx^2 |Dv| & \le \frac{1}{\lambda}|\bar{u}|,  
\end{align*}
where the constant $C$ depends on $\norm u$. Now using the above inequalities in \eqref{eq:fuldiscrete2}, we get
\begin{align*}
\frac{1}{2} w^2 & = \frac{1}{2} \bar{v^2} - \Dt D \left (\frac{1}{2} u v^2 -\frac{\Dt}{3} v^3 \right) + C\Dt \bar{v^2} |Du|\\
& \qquad + \lambda^2 \Dx^2 (Dv)^2 \left( 4 \max |u^n|^2 + 2 \max |u^{n-1}|^2 + \frac{1}{12 \lambda} \max |u^n|\right) \\
& \qquad \qquad  - \frac{\Dx^2}{8} (Dv)^2.
\end{align*}
Now we choose $\Dt$ in such a way that,
\begin{equation}
\label{eq:fullycfl}
\begin{aligned}
\lambda^2 \left( 4 \max |u^n|^2 + 2 \max |u^{n-1}|^2 \right) + \frac{ \lambda}{12} \max |u^n| \le \frac{1}{16}.
\end{aligned}
\end{equation}
Now after summing over $j$, we have
\begin{equation}
\label{eq:dis8}
\begin{aligned}
\norm w^2 + \frac{\Dx^2}{8} \norm {Dv}^2 \le \norm v^2 + C\Dt \text{max}|Du| \norm{v}^2.
\end{aligned}
\end{equation}

\noindent Now using that
\begin{align*}
v^{n+1} = w - \Dt \Dpx \Dmx^{2} v^{n+1} + \Dt \Dpx^3 \Dmx^2 v^{n+1},
\end{align*}
we get 
\begin{equation}
\label{eq:dis9}
\begin{aligned}
\norm {v^{n+1}}^2 +  \Dx \Dt \norm{ \Dpx \Dmx {v^{n+1}}}^2 & + \Dx \Dt \norm{ \Dmx \Dpx^2 {v^{n+1}}}^2 + \frac{\Dx^2}{8} \norm {Dv^n}^2  \\
& \le \norm {v^n}^2  + C\Dt \max |Du^n| \norm{v^n}^2.
\end{aligned}
\end{equation}
Which proves the second part \eqref{eq:fullest2} of the Lemma.
\end{proof}

\noindent Now we are ready to prove Theorem ~\ref{theo:fulllocalexistence}.
\begin{proof}(of Theorem ~\ref{theo:fulllocalexistence})\\ 

\noindent First note that, from the definition of $v^n$, \eqref{eq:fullydiscrete} can be rewritten as,
\begin{equation}
\label{eq:dis10}
\begin{aligned}
 v^{n+1} = \Dpt u^n = \frac{\Dx}{2\lambda} \Dpx \Dmx u^n - \bar{u}^n Du^n - \Dmx \Dpx^2 u^{n+1} + \Dpx^3 \Dmx^2 u^{n+1}
\end{aligned}
\end{equation}
Therefore,
\begin{align*}
\norm {\Dpx^3 \Dmx^2 u^{n+1}} & \le \norm {v^{n+1}} + \norm {\bar{u}^n Du^n} + \frac{\Dx}{2\lambda} \norm {\Dpx \Dmx u^n} + \norm {\Dmx \Dpx^2 u^{n+1}}\\
& \le \norm {v^{n+1}} + \max| Du^n| \norm{u^n} + \norm {\Dmx \Dpx^2 u^{n+1}} + C\\
& \le \norm {v^{n+1}} + C_1 \norm{\Dpx^3 \Dmx^2 u^{n+1}} + C_2 \norm{{\Dmx} \Dpx^2 u^{n}}\\
& \le \norm {v^{n+1}} + C_1 \norm{\Dpx^3 \Dmx^2 u^{n+1}} \\
& \qquad \qquad +\left( \Dt C_2 \norm{{\Dmx}\Dpx^2 v^{n+1}} + C_3 \norm{{\Dmx} \Dpx^2 u^{n+1}} \right )\\
& \le \norm {v^{n+1}} + C_1 \norm{\Dpx^3 \Dmx^2 u^{n+1}} \\
& \qquad \qquad \qquad \qquad + \left( \norm{v^n}^2 ( 1 + C \Dt \max|Du^n|) \right)^{\frac{1}{2}},
\end{align*}
where we have used \eqref{eq:dis9} to estimate $\Dt \norm {\Dmx \Dpx^2 v^{n+1}}$. Now by the CFL condition \eqref{eq:cfl}, $\Dt \max |Du| \le C $ for some constant $C$. Hence, finally we have
\begin{equation}
\label{eq:dis11}
\begin{aligned}
\norm {\Dpx^3 \Dmx^2 u^{n+1}} \le c_0 + c_1 \norm {v^{n+1}} + c_2 \norm {v^{n}},
\end{aligned}
\end{equation}
for some constants $c_0, c_1, c_2$ that are independent of $\Dx$

Now using this result in \eqref{eq:dis9}, we have
\begin{align*}
\norm {v^{n+1}}^2 \le \norm {v^n}^2  + \Dt \left( d_1 \norm {v^n}^2 + d_2( \norm {v^n}^3 + \norm {v^n}^2 \norm {v^{n-1}})\right ),
\end{align*}
for constants $d_1$ and $d_2$. Set $ a_n = \norm{v^n}^2$, so that
\begin{align*}
a_{n+1} \le a_n + \Dt \left( d_1 a_n + d_2 \left( a_n^{\frac{3}{2}} + a_n a_{n-1}^{\frac{1}{2}}\right) \right).
\end{align*}

Now let $\alpha$ be the solution of the differential equation 
\begin{align*}
\frac{d\alpha}{dt} = d_1 \alpha + d_2 \alpha^{\frac{3}{2}}, \qquad \alpha_0 = a \ge 0.
\end{align*}

The solution has a blow up time 
\begin{align*}
T^{\infty} = t_0 + \frac{2}{d_1} \log \left( 1 + \frac{d}{d_1} \right).
\end{align*}

Furthermore, for $t < T^{\infty}$, $\alpha$ is a convex function of $t$. We now claim that for $ n\Dt < T^{\infty} (a,0)$, $a_n < \alpha(n\Dt; (0,a))$. This clearly holds for $n =0$ and $n=1$ (since $\alpha$ is increasing in $t$). Assuming that the claim holds for integers up to $n$, we get
\begin{align*}
a_{n+1} & \le \alpha(n\Dt; (0,a))\\
& \quad + \Dt d_1 \alpha(n\Dt; (0,a)) \\
& \quad + \Dt d_2 \left ( \alpha(n\Dt; (0,a))^{\frac{3}{2}} + \alpha(n\Dt; (0,a))\alpha((n-1)\Dt; (0,a))^{\frac{1}{2}}  \right )\\
& \le \alpha(n\Dt; (0,a)) + \Dt \left( d_1 \alpha(n\Dt; (0,a)) + 2 d_2 \alpha(n\Dt; (0,a))^{\frac{3}{2}}\right)\\
& \le \alpha((n+1)\Dt; (0,a)).
\end{align*}
Hence for $ t \le T = T^{\infty}/2$, $\norm{v^n} \le C$ for some constant $C$ independent of $\Dx$. From this all the results in Theorem ~\ref{theo:fulllocalexistence} follows.
\end{proof} 
Therefore, we can follow exactly the approach of the semi discrete case to conclude that $u_{\Dx}$ converges in $L^2$ to some function $u(x,t)$ for $t \le T$. Furthermore, we have that
\begin{align*}
\norm {u_t} \le c_1, \quad \norm {u_{xxx}} \le c_2 \quad  \text {and} \quad \norm {u_{xxxxx} }\le c_3
\end{align*}
Therefore we can show that $u(x,t)$ satisfies \eqref{eq:kawahara}.

\section{uniqueness}

To prove the uniqueness, let us assume that there exists two solutions $u(x,t)$ and $v(x,t)$ of the problem \eqref{eq:kawahara}. Then the function $w = u-v$ satisfies the following equation
\begin{equation}
\label{eq:unique}
\begin{aligned}
w_t & = -(u u_x -v v_x) - w_{xxx} +  w_{xxxxx} \\
& =  -w u_x - v w_x - w_{xxx} +  w_{xxxxx}\\
w(&x,0) = 0.
\end{aligned}
\end{equation} 

Hence, by taking inner product of the above equation with $w$, we have
\begin{align*}
(w,w_t) & = \frac{1}{2} \frac{\partial \norm{w}^2}{\partial t} = - (w, wu_x) - (w, vw_x) -  (w, w_{xxx} ) + (w, w_{xxxxx} )\\
& =  -(w^2, u_x) + (w^2, v_x)/2,
\end{align*}
by periodicity.
Now use of estimate \eqref{eq:global1} implies,
\begin{align*}
\frac{\partial \norm{w}^2}{\partial t} \le C \norm{w}^2,
\end{align*}
for some constant $C$. Now as $\norm{w(\cdot,0)}^2 = 0$, it is clear that $\norm{w(\cdot,t)}^2 = 0$ for all $t$ which consequently implies uniqueness.

\section{Numerical experiments}
\subsection{Numerical experiment  1}
The fully-discrete scheme given by \eqref{eq:fullydiscrete} have been tested on a suitable numerical experiment in order to demonstrate its effectiveness. It is well known that a soliton is a self-reinforcing solitary wave that maintains its shape while it travels at a constant speed. Solitons are caused by a cancellation of nonlinear and dispersive effects in the medium. Several authors [ see \cite{juan}, \cite{mau}, \cite{nils} ] have studied the soliton experiments in the context of both KdV and Kawahara equation. Here we are interested in the soliton experiment for the Kawahara equation only. We are going to compare our result with the results given by \cite{juan}.

Although all the numerical experiments performed in \cite{juan} based on the equation given by
\begin{equation}
\label{eq:juana}
\begin{aligned}
u_t + u_x + uu_x + u_{xxx} & = u_{xxxxx}, 
\end{aligned}
\end{equation}
but it is indeed very easy to see that \eqref{eq:kawahara} and \eqref{eq:juana} are completely equivalent by way of simple change of variables. In \cite{juan}, authors have considered the following scheme for the Kawahara equation \eqref{eq:kawahara}
\begin{align*}
\frac{u_i^{n+1} - u_i^n}{\Dt} + \frac{1}{2} \Dmx [u_i^n]^2 + A u_i^{n+1}=0,
\end{align*} 
where $ A = \Dpx \Dpx \Dmx - \Dpx \Dpx \Dpx \Dmx \Dmx $.
In the case of Kawahara equation given by \eqref{eq:kawahara}, if we consider the initial function in the following form given by
\begin{align*}
u(x,0) = \frac{105}{169} \sech^4 \left (  \frac{1}{2 \sqrt 13} ( x - c)\right ),
\end{align*}
then it is known from \cite{dar} that the explicit solution is given by the following travelling wave 
\begin{align*}
u(x,t)= \frac{105}{169} \sech^4 \left (  \frac{1}{2 \sqrt 13} ( x - \frac{36 t}{169}- c)\right )
\end{align*}
This result can be verified through substitution.\\
Since we know that the behaviour of the exact solution for Kawahara equation, mainly which remains its shape as time grows, it will be interesting to see how the numerical solution given by the scheme \eqref{eq:fullydiscrete} evolves with time.
We will use the following notations: UK scheme - scheme described in this paper and JMO scheme - scheme described as in \cite{juan} and $\norm{u}_{l^2} = \left( \Dx \sum_{k} u_k^2 \right)^{\frac{1}{2}}$.
In order to compare with the existing scheme given by \cite{juan}, we present the $l^2$
errors on a computational domain $[-40,40]$, between exact solution and the solution generated by the $UK$ and $JMO$ schemes in table \ref{tab:1}.

\begin{table}[htbp]
\centering
\begin{tabular}{l|ccc}
Mesh points                 & $UK$               & $JMO$ \\
\hline
4000                        & 2.7e-3             & 1.2e-3  \\
8000                        & 1.4e-3             & 6.2e-4  \\
12000                       & 9.2e-4             & 4.2e-4  \\
16000                       & 7.0e-4             & 3.0e-4 
\end{tabular}
\caption{Numerical Experiment $1$: $l^2$ errors between exact and simulated solutions at time 
  $t=10$ for both $UK$ and $JMO$ schemes . }
\label{tab:1} 
\end{table}

In the following figures we show the behaviour of the numerical solutions at different times. In this case we have used a domain $[-20,50]$, $5000$ mesh points and a CFL number $0.75$. We will compare our results with the results given by \cite{juan}.
\begin{figure}[htbp]
  \centering
    \subfigure {\includegraphics[width=0.45\linewidth]{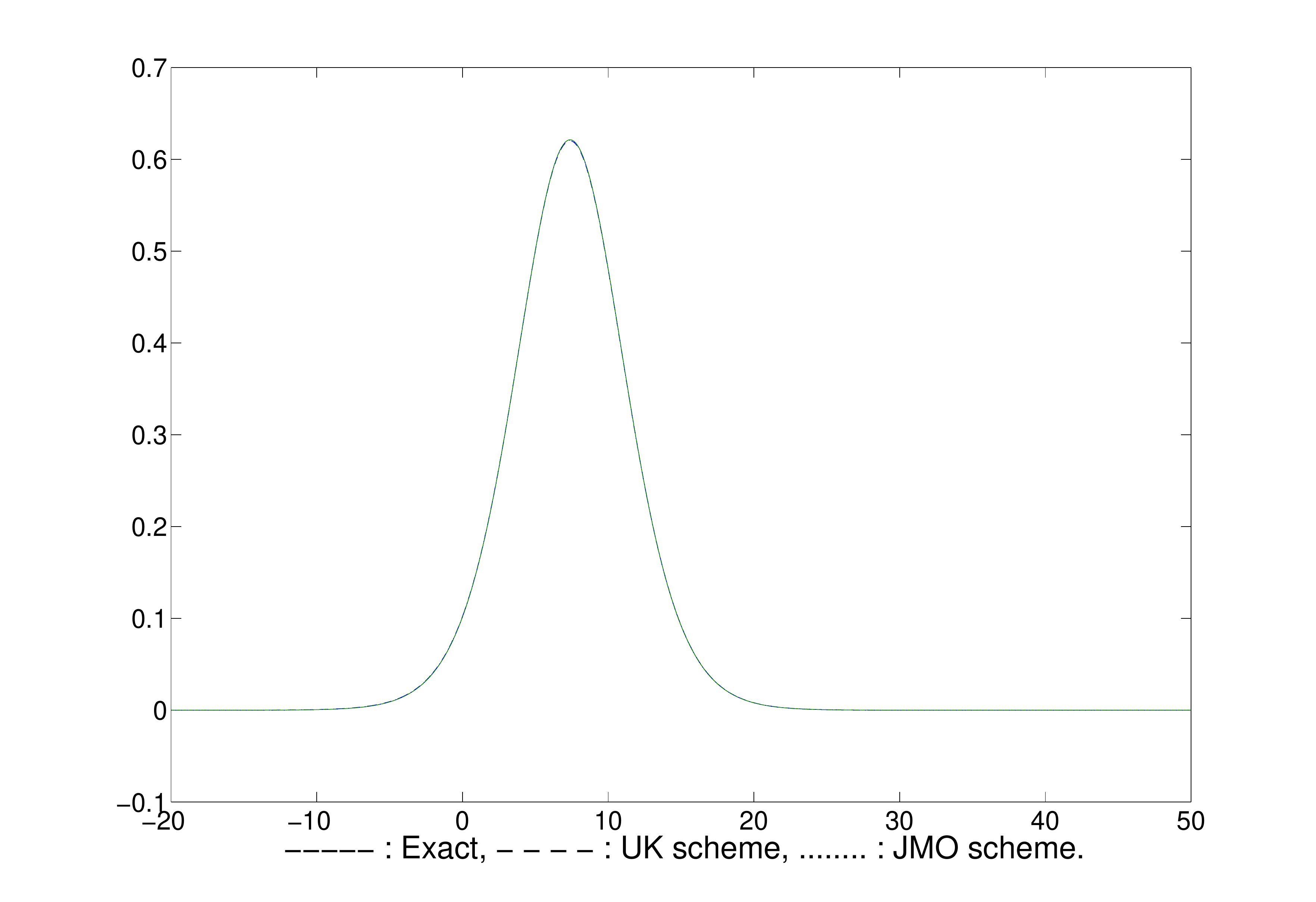}}
    \subfigure {\includegraphics[width=0.45\linewidth]{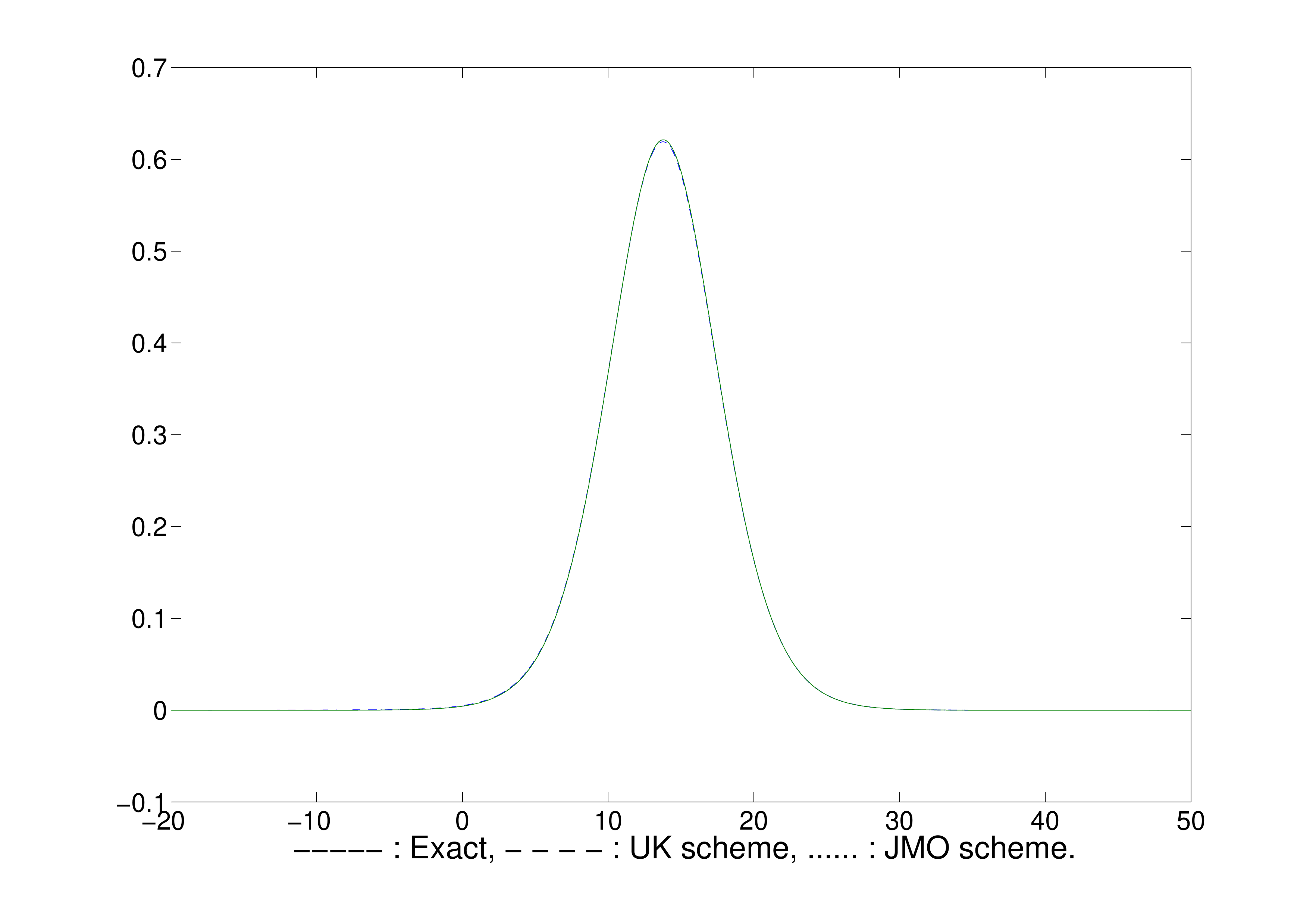}}
    \subfigure {\includegraphics[width=0.45\linewidth]{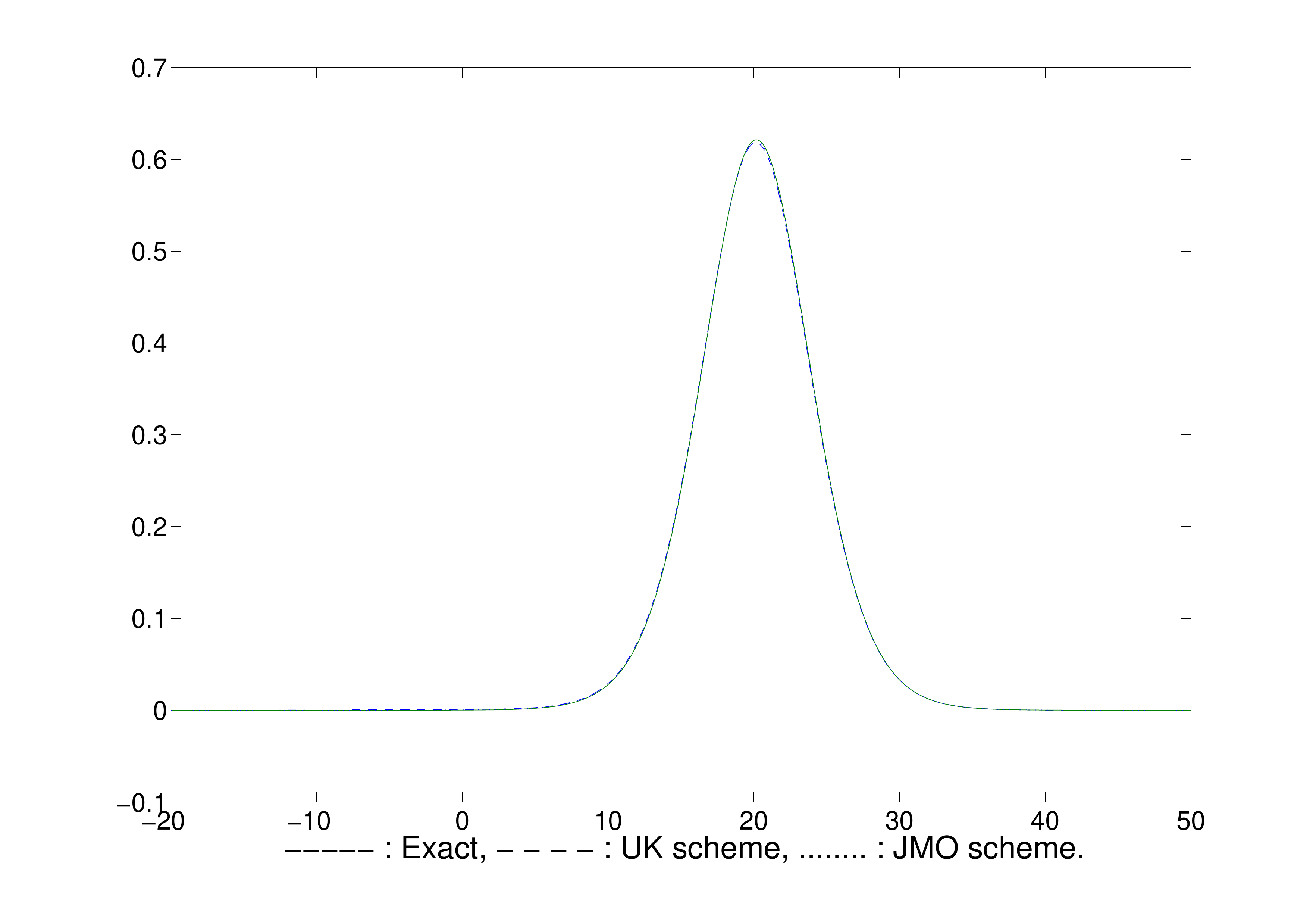}}
    \subfigure {\includegraphics[width=0.45\linewidth]{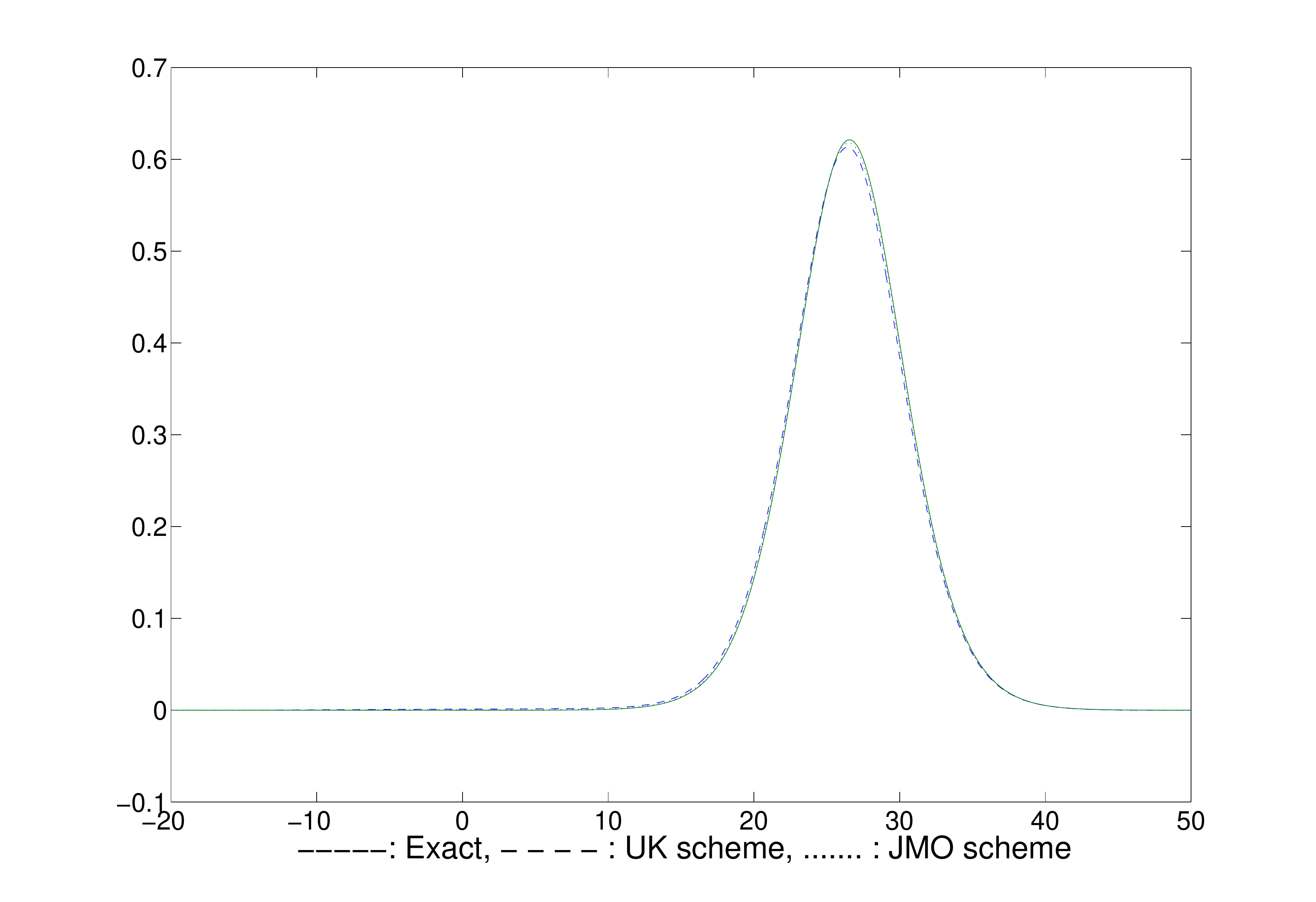}}
\caption{ Top Left: Exact and Numerical solution at time $t=30$ for both UK scheme and JMO scheme;
          Top Right: Exact and Numerical solution at time $t=60$ for both UK scheme and JMO scheme;
          Bottom Left: Exact and Numerical solution at time $t=90$ for both UK scheme and JMO scheme;
          Bottom Right: Exact and Numerical solution at time $t=120$ for both UK scheme and JMO scheme.}
\protect \label{fig:1}
\end{figure}

\subsection{Numerical experiment  2}

Now we will move on to the second numerical experiment. Here instead of one soliton we have considered two solitons, given by the initial data
 \begin{align*}
u(x,0) = \frac{105}{169} \left \lbrace sech^4 \left (  \frac{1}{2 \sqrt 13} ( x - 20)\right ) + \frac{1}{4} sech^4 \left (  \frac{1}{\sqrt 13} ( x - 60)\right ) \right \rbrace .
\end{align*}

This will essentially corresponds to the superposition of two solitons with different speeds given by the nonlinear term $uu_x$ of equation \eqref{eq:kawahara}. We have used $10,000$ points in space in the interval $[-100,100]$. Also we have run both schemes upto time $t = 50$. The following plot shows the behavior of the numerical solutions, where we see oscillatory structures of solitons.

\begin{figure}[htbp]
  \centering
    \subfigure {\includegraphics[width=0.45\linewidth]{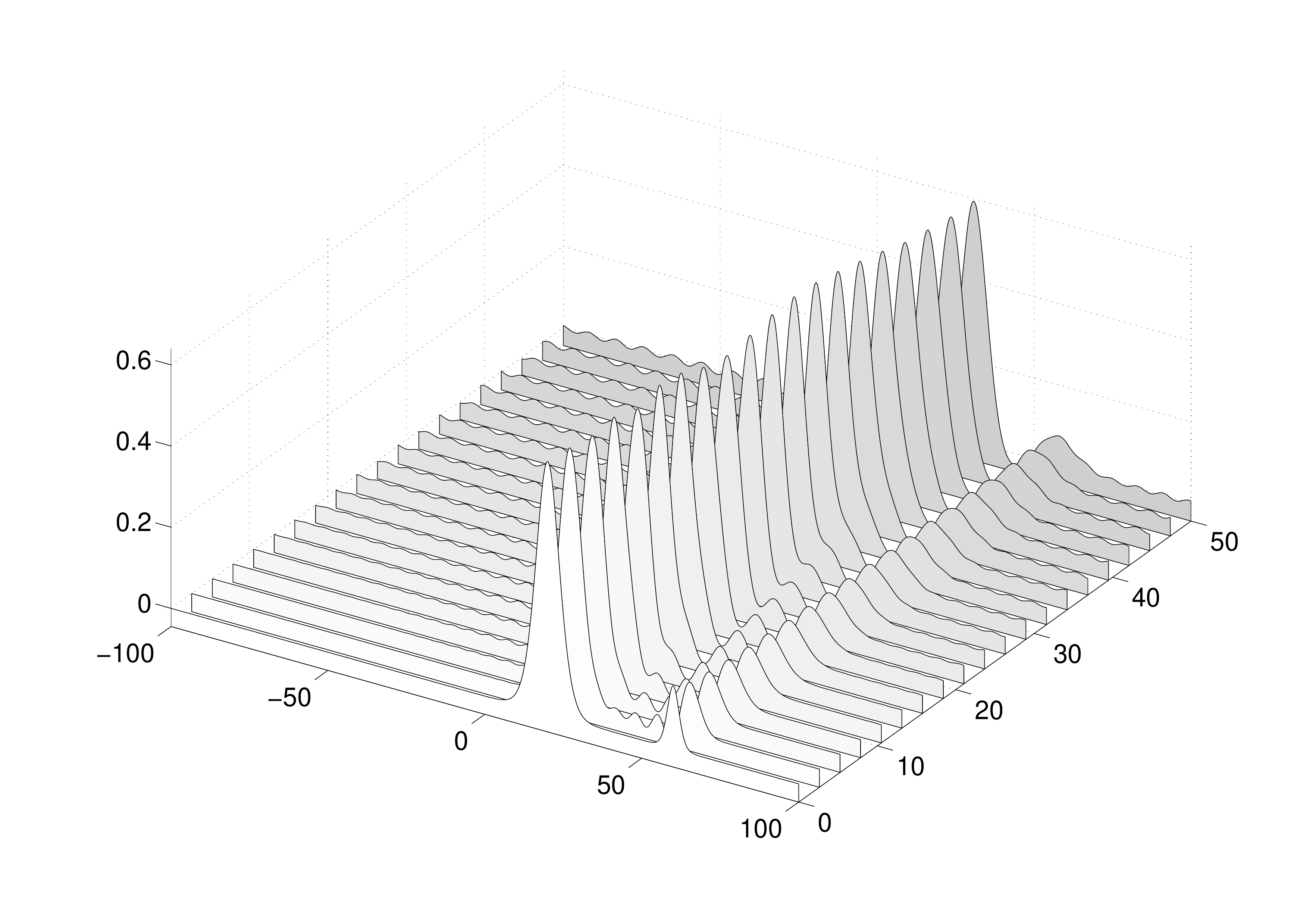}}
    \subfigure {\includegraphics[width=0.45\linewidth]{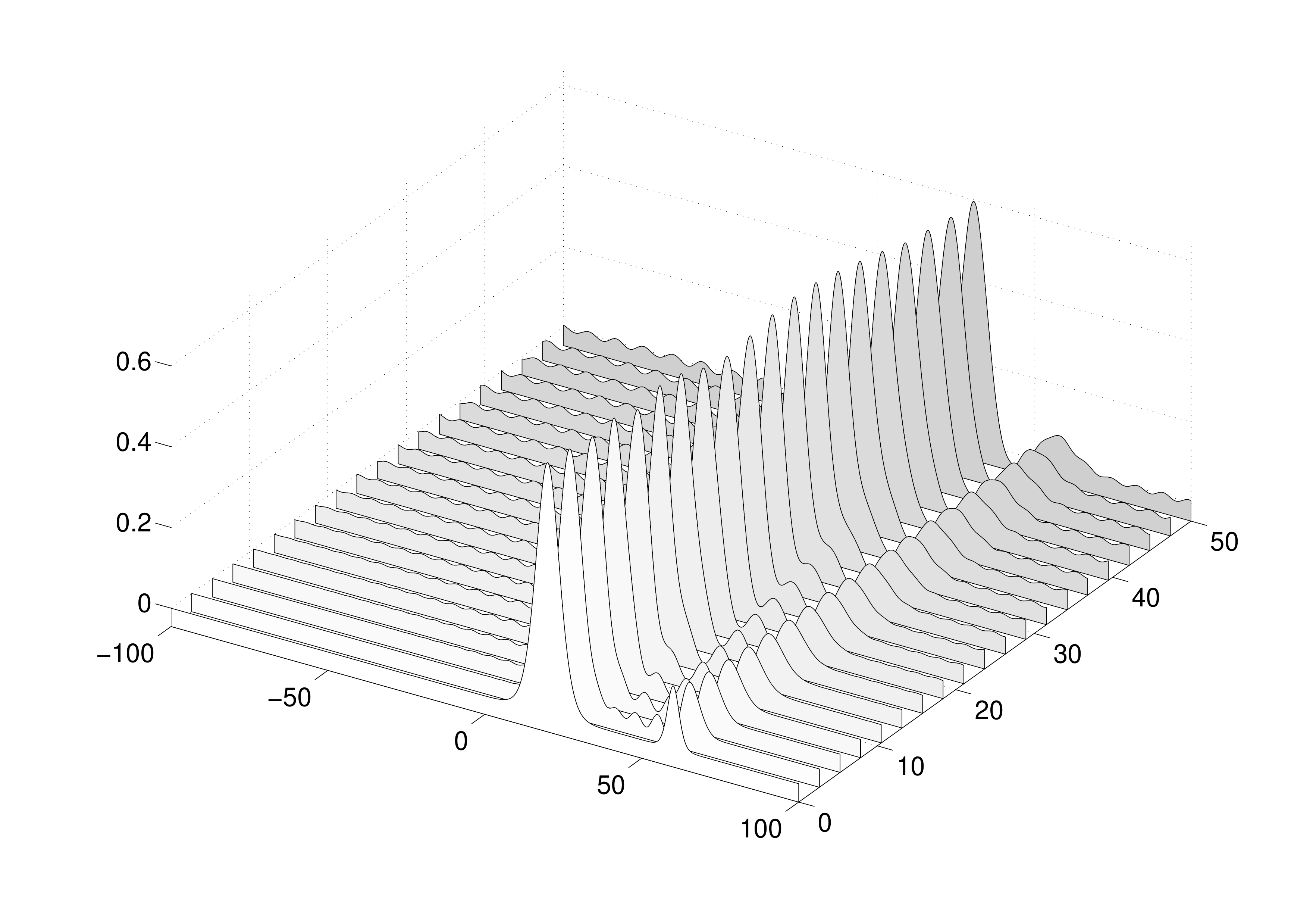}}
    \caption{ Left: Numerical solution for UK scheme;
              Right: Numerical solution for JMO scheme.}
\protect \label{fig:2}
\end{figure}

\thanks
 {Acknowledgements: The author would like to thank Nils Henrik Risebro for useful and motivating discussions.}

\end{document}